\newcommand{\scal}{{\mathcal S}}
\newtheorem*{main-theorem}{Main Theorem}
\newtheorem{proposition}{Proposition}[section]
\newtheorem{theorem}{Theorem}
\newtheorem*{old-thm}{Theorem}
\newtheorem{lemma}[proposition]{Lemma}
\newtheorem{corollary}[proposition]{Corollary}
\theoremstyle{definition}
\newtheorem*{remark}{Remark}
\numberwithin{equation}{section}
\def\C{{\mathbb C}}
\def\11{\mathds{1}}
\def\ZZ{{\mathbb Z}}
\def\reals{{\mathbb R}}
\def\Ci{{\mathcal C}^\infty}
\def\Re{\,\mathrm{Re}\,}
\def\Im{\,\mathrm{Im}\,}
\def\WF{\mathrm{WF}_h\,}
\def\O{{\mathcal O}}
\def\s{{\mathcal S}}
\def\nbhd{\mathrm{neigh}\,}
\def\phi{\varphi}
\def\be{\begin{eqnarray*}}
\def\ee{\end{eqnarray*}}
\def\ben{\begin{eqnarray}}
\def\een{\end{eqnarray}}
\def\lll{\left\langle}
\def\rrr{\right\rangle}
\def\L2R{L_{\text{Rest}}^2}
\def\tchi{\tilde{\chi}}
\def\TT{\mathbb{T}}
\begin{document}
\title[Quantum Flux]{Quantum Flux and Quantum Ergodicity for Cross Sections}
\author{Hans Christianson}

\author{John Toth}

\address{}
\email{}

\begin{abstract}
For sequences of quantum ergodic eigenfunctions, we
define the quantum flux norm associated to a codimension $1$
submanifold $\Sigma$ of a non-degenerate energy surface.   We prove restrictions
of eigenfunctions to $\Sigma$, realized using the quantum flux norm,
are quantum ergodic.  We compare this result to known results from
\cite{CTZ} in the case of 
Euclidean domains and hyperfurfaces.
As a further application, we consider complexified analytic eigenfunctions and prove a second microlocal analogue of \cite{CTZ} in that context.
\end{abstract}
\maketitle

\begin{center}
In memory of Steve Zelditch; mentor, collaborator, and friend.
\end{center}


\section{Introduction}
\label{introduction}


The purpose of this note is to prove a general and abstract version of the QER (quantum ergodic
restriction) theorem in \cite{CTZ} where a hypersurface $H \subset X$ of
a compact Riemannian manifold is replaced by a general hypersurface in
the energy shell of the principal symbol of a pseudodifferential operator. The QER theorem of \cite{CTZ} proves that the Cauchy data of eigenfunctions
along hypersurfaces $H \subset X$ is quantum ergodic along the hypersurface $H$.
 In the special case where $H = \partial X$, the boundary of
$X$, and where the eigenfunctions satisfy standard boundary conditions (DIrichlet, Neumann, Robin),
the QER property was proved earlier in \cite{HZ}.    The general QER theorem of
this article proves  a phase space analogue for  generalized Cauchy data of eigenfunctions along   cross sections
to the classical flow in an energy shell in $T^*X$.  
     Let $X$ be a
smooth, compact  manifold of dimension $n$ and assume for simplicity in
this work that $\partial X = \emptyset$.  Let $P(h) \in
\Psi_h^{k,0}(X)$
be a self-adjoint semiclassical pseudodifferential operator with principal symbol
$p(x,\xi) \in \s^{k,0}(T^*X)$ of real principal type.  That is, $p$ is
real-valued and independent of $h$.  Assume $p$ satisfies $dp
\neq 0$ on $\{ p=0 \}$ and $p$ is elliptic outside a compact set, that
is, 
there exists $C>0$ such that $|p| \geq \lll \xi \rrr^k /C$ whenever
$|\xi| >C$.  
Let $\phi_t  = \exp(tH_p)$ be the Hamilton flow of $p$, and
let $\Sigma \subset \{p=0\}$ be a compact, codimension $1$ submanifold.    We
prove that for any cross section, quantum ergodicity of a sequence of
eigenfunctions 
implies the microlocal restrictions of
eigenfunctions to $\Sigma$ are quantum ergodic with respect to a
natural measure. We prove this result by means of
associating a {\it quantum flux norm} to a sequence of quantum ergodic
eigenfunctions $P(h) u_j(h) =
E_j(h) u_j(h)$ near $\Sigma$.  We will assume throughout that the eigenfunctions $u_j(h)$ are all real-valued.

To state the result we need further notation. We say the system 
\begin{equation} \label{EV} P(h) u_j = E_j(h) u_j \end{equation} is quantum ergodic 
if there exists a density one subsequence $\scal$, such that for every semi-classical pseudo-differential operator $Op_h(a)$ on $X$
$$\lim_{h_j \to 0, h_j \in \scal} \langle Op_h(a) u_j, u_j \rangle  = \int_{p^{-1}(0)} a_0 d\mu_p, $$
where $d\mu_p$ is Liouville measure on the energy surface $\{p = 0\}
$, and $a_0$ is the semiclassical principal symbol of $Op_h(a)$.  For background we refer
to  \cite{TZ2, CTZ,Zwo-book} or for the original articles on quantum ergodicity,  \cite{Sni, Zel1,CdV, HMR,ZZ}.  

In this note, we use the term {\it quantum ergodic sequence} to refer to the sequence $\scal$ of eigenfunctions satisfying the quantum ergodic limit.  
The following theorem
asserts that the {\it phase space} restrictions of these quantum ergodic eigenfunctions to a codimension
 one hypersurface $\Sigma \subset \{p=0\}$ are also quantum ergodic
with respect to a natural measure.

\begin{theorem}
\label{theorem-1}
Assume $\{ u_j \}$ is a 
quantum ergodic sequence of eigenfunctions of $P(h)$, $P(h) u_j =
E_j(h) u_j$ with $E_j(h) = o(1)$.  Then for any  $A \in \Psi_{h, \Sigma,
  \text{fi}}^{0,0}(X)$, as $h \to 0+$,  
\begin{equation}
\label{E:QF-1}
\lll A u_j, u_j \rrr_{QF} \rightarrow \frac{1}{(n-1)! \mu ( \{p=0\})} \int_\Sigma
\sigma_h(A)(H_p \beta)|_{\Sigma} (\omega |_{\Sigma})^{n-1}.
\end{equation}
Here $\lll \cdot, \cdot \rrr_{QF} = \lll \cdot, \cdot \rrr_{QF( \Sigma)}$ denotes the quantum flux norm
associated to $\Sigma$, $\mu$ is the Liouville measure on $\{p=0\}$, $\omega$ is the symplectic form on $T^*X$, and $\beta$ is a normalized defining function for $\Sigma$ compatible with the quantum flux norm.
\end{theorem}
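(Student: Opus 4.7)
The natural strategy is to reduce the statement to the standard quantum ergodicity hypothesis by rewriting the quantum flux pairing $\lll Au_j,u_j\rrr_{QF}$ as an ordinary matrix element $\lll B_h u_j,u_j\rrr$ of an auxiliary semiclassical pseudodifferential operator $B_h \in \Psi_h^{0,0}(X)$, then applying the QE property to $B_h$, and finally identifying the resulting Liouville integral with the surface integral on $\Sigma$ claimed in \eqref{E:QF-1}.

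First I would unpack the quantum flux norm. By construction it is built, via a semiclassical Green's identity at $\Sigma$, from cutoffs in the defining function $\beta$ weighted by the transversal Hamilton speed $H_p\beta$; the normalization of $\beta$ is chosen so that on an $\epsilon$-slab $\{|\beta|<\epsilon\}$ the smoothed form $\epsilon^{-1}\lll A\,\chi(\beta/\epsilon)\cdot,\cdot\rrr$ with $\chi$ of integral one converges to $\lll A\cdot,\cdot\rrr_{QF}$ as $\epsilon\to 0$, up to Jacobian factors involving $H_p\beta$ coming from the flux interpretation (which morally replaces a normal derivative of an eigenfunction by $(i/h)(H_p\beta)$ acting on the restriction). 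Replacing the $c$-number cutoff by a Weyl quantization $\chi^w(\beta^w/\epsilon)$, and using the eigenvalue equation $P(h)u_j = E_j(h)u_j$ with $E_j(h)=o(1)$ to kill commutator terms of the form $\frac{i}{h}[P(h),\cdot]$ modulo $o(1)$, I expect to realize
\[
\lll Au_j, u_j\rrr_{QF} = \lll B_h u_j, u_j\rrr + o(1),
\]
where $B_h \in \Psi_h^{0,0}(X)$ has principal symbol, as a distribution on $T^*X$,
\[
\sigma_h(B_h) \;=\; \sigma_h(A)\,(H_p\beta)^2\,\delta(\beta),
\]
the two factors of $H_p\beta$ arising from the flux weight and the normal-derivative replacement, respectively. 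The hypothesis $A \in \Psi_{h,\Sigma,\text{fi}}^{0,0}$, a flow-invariant class microlocalized near the cross section, is precisely what allows this reduction to be uniform in $h$ and independent of the particular smoothing.

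Second, applying quantum ergodicity to $B_h$ along the density-one sequence $\scal$ gives
\[
\lll B_h u_j, u_j\rrr \;\longrightarrow\; \frac{1}{\mu(\{p=0\})}\int_{p^{-1}(0)} \sigma_h(B_h)\, d\mu_p.
\]
The right-hand side is then computed by a coarea/Leray reduction: in local Darboux coordinates aligned so that $H_p = \partial_{x_1}$ and $\beta$ has $H_p\beta > 0$, the distributional identity $\delta(\beta)\,d\mu_p = (H_p\beta)^{-1}(\omega|_\Sigma)^{n-1}/(n-1)!$ holds on $\Sigma$, so the factor $(H_p\beta)^2\delta(\beta)$ collapses to $(H_p\beta)|_\Sigma\cdot(\omega|_\Sigma)^{n-1}/(n-1)!$, yielding exactly the right-hand side of \eqref{E:QF-1}.

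The principal obstacle is the first step, namely justifying that the limit procedure defining the QF norm commutes with the QE limit $h\to 0$: this requires uniform semiclassical control of the $\beta$-cutoffs as the cutoff scale $\epsilon$ and $h$ jointly degenerate, together with a clean derivation of the effective symbol $\sigma_h(B_h)$ from the Weyl calculus on the flow-invariant class $\Psi_{h,\Sigma,\text{fi}}^{0,0}$. The normalization of $\beta$, the flow invariance of $\sigma_h(A)$, and the hypothesis $E_j(h)=o(1)$ (which microlocalizes $u_j$ on $\{p=0\}$) all have to cooperate here. Once the reduction to $B_h$ is in hand, the geometric identification in the second step is essentially automatic.
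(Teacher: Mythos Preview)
Your overall plan---rewrite $\lll Au_j,u_j\rrr_{QF}$ as an ordinary matrix element $\lll B_h u_j,u_j\rrr$, apply interior quantum ergodicity, then identify the Liouville integral as a surface integral on $\Sigma$---is exactly the route the paper takes (via Proposition~\ref{prop-1}). The gap is in your first step: you have misread what the quantum flux norm is.

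In this paper the QF pairing is defined \emph{directly} (Section~\ref{preliminaries} and \eqref{AQF}) as
\[
\lll Au_j, u_j\rrr_{QF} \;=\; \Big\langle \tfrac{i}{h}[P,\chi]_+\, A\,u_j,\; u_j\Big\rangle_{L^2(X)},
\]
with $\chi=\tilde\chi(\beta/\epsilon)\psi(p)$ a cutoff near $\Sigma$. There is no Green's identity, no ``normal-derivative replacement,'' and no separate ``flux weight'' in this definition: the auxiliary operator $B_h$ you seek is simply $\tfrac{i}{h}[P,\chi]_+ A\in\Psi_h^{0,0}$, whose principal symbol is
\[
\sigma_h(B_h)\;=\;(H_p\chi)\,\sigma_h(A)\;=\;\tfrac{1}{\epsilon}\tilde\chi'(\beta/\epsilon)\,(H_p\beta)\,\sigma_h(A),
\]
carrying exactly \emph{one} factor of $H_p\beta$, not two. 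Your proposed symbol $\sigma_h(A)(H_p\beta)^2\delta(\beta)$ is therefore wrong; the heuristic that generated the second factor (``morally replaces a normal derivative by $(i/h)(H_p\beta)$'') does not correspond to anything in the construction. That your two steps nonetheless combine to the stated formula is because your Leray identity also differs from the paper's reduction by a compensating factor of $(H_p\beta)^{-1}$; this does not validate either step on its own.

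The obstacle you flag---commuting the $\epsilon\to 0$ and $h\to 0$ limits---is handled much more cheaply in the paper than you anticipate. The flow-invariance hypothesis $A\in\Psi_{h,\Sigma,\mathrm{fi}}^{0,0}$ is used precisely in Lemma~\ref{qf-lemma} to show that the QF pairing is independent of the choice of $\chi$ modulo $\O(h^\infty)$. Hence for each fixed $\epsilon>0$ the operator $\tfrac{i}{h}[P,\chi]_+A$ is an honest element of $\Psi_h^{0,0}$, interior QE applies directly to it, and one then lets $\epsilon\to 0$ in the resulting Liouville integral (which carries only an $\O(\epsilon)$ error); no uniform control of a joint $(\epsilon,h)$ degeneration is needed.
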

Here the operator class  $\Psi_{h, \Sigma, \text{fi}}^{0,0}(X)$ defined
precisely below in \eqref{E:fi-def} is the space of operators which
commute microlocally with $P$ near $\Sigma$.  We lose nothing from
this definition; it is only to make the quantum flux norm independent
of the choice of cutoffs.  However we can dispense with this if we are
more careful in our choice of the order of limiting processes (see \S
\ref{SS:cor} below).

\begin{remark}
In particular, if the whole sequence of eigenfunctions $\{ u_j \}$ is quantum uniquely ergodic, then the
proof of Theorem \ref{theorem-1} shows the quantum flux norms are also
quantum uniquely ergodic.

\end{remark}

\begin{remark}
It is of course difficult to interpret the left hand side of
\eqref{E:QF-1} without an explicit formula.  Let us assume we are
working in a microlocal neighbourhood of a point $(x_0, \xi_0) \in
T^*X$ in which $\Sigma \subset \{p = 0 \} \subset T^*X$ is a symplectic submanifold; that is,
$\omega|_\Sigma$ is a non-degenerate 2-form.  Let $\nu$ be a unit
normal vector to $\Sigma$, and assume $\beta \in \Ci ( T^*X )$
satisfies $\nu = \nabla \beta$.  Then since $dp \neq 0$ on $\{ p = 0
\}$, we must have $\{ p, \beta \} \neq 0$, since otherwise either $dp
= 0$, $d\beta = 0$, or $d p$ is parallel to $d\beta$, none of which
can be true.  Then we can choose symplectic coordinates such that $p =
\xi_n$ and $\beta = x_n$ in a neighbourhood of $(x_0, \xi_0)$.  

Now for $A$ to be a microlocally flow invariant pseudodifferential
operator near $\Sigma$, that means that $[A, P] = 0$ microlocally, or
that $A$ is independent of $x_n$.  Hence $A = A(x', hD', hD_n )$.  The
dependence on $(x', hD')$ is tangential to $\Sigma$, while $hD_n$ is
normal.  We observe that this representation cannot possibly be
unique, since any symplectic transformation in $(x', \xi')$ preserves
this decomposition but alters the symbol $A$ according to Egorov's
theorem.  However, once a {\it polarization} is chosen, that is, a
choice of horizontal $x'$ directions and vertical $\xi'$ directions,
then the restrictions of eigenfunctions to $H = \{ x_n = 0 \} \subset
\{ (x', x_n) \}$ is well-defined via the FBI transform.  In this case,
the quantum flux norm has an explicit formula.

To make this precise, let $u \in \Ci$ have semiclassical wavefront set
sufficiently close to $(x_0, \xi_0)$.  This is the case for example if $u$ is an
eigenfunction and if the symbol for $A$ has microsupport near $(x_0,
\xi_0)$.  Then we can cut off $u$ to a slightly larger microlocal
neighbourhood and assume the microsupport properties of $u$.  
Then for $(x, \xi)$ near $(x_0, \xi_0)$ let 
\[
T_0 u(x, \xi) = c_n h^{-3n/4} \int u(y) e^{i \xi ( x-y)/h} e^{-(x-y)^2 /2h } dy
\]
be the (semiclassical) FBI transform of $u$.  We have
\[
T_0 : \s( \reals^n) \to \s( \reals^{2n} ),
\]
and $T_0^*: \s( \reals^{2n}) \to \s( \reals^{n} )$ is given by
\[
T_0^* v(y) = c_n h^{-3n/4} \int v(x, \xi )e^{-i \xi (x-y) /h}
e^{-(x-y)^2 / 2h} d x d \xi .
\]
It is then classical that for a suitable constant $c_n$,
\[
T_0^* T_0 = I: L^2(dx) \to L^2(dx).
\]
We let also $T_1$ be the FBI transform in the $(x', \xi')$ variables.  

Now let $\kappa: \nbhd ( x_0, \xi_0) \to \nbhd ( x_0, \xi_0)$ be a
local symplectomorphism taking $(p, \beta)$ to $(\xi_n, x_n )$.  Let
$F$ be the semiclassical quantization of $\kappa$.  Then
\[
u|_H(x') = T_1^* F T_0 u ( x' )
\]
is the microlocal restriction of $u$ to (the projection $H$) of
$\Sigma$ given this polarization, and
\[
\lll A u_j, u_j \rrr_{QF} = \lll A(x', hD', hD_n ) u|_H, u|_H
\rrr_{L^2(H)}.
\]
The FBI transform in the context of complexified eigenfunctions is discussed in depth in Section \ref{s:FBIDescription}.

\end{remark}


Now let $H$ be a submanifold in the base manifold $X$, and let $P =
-h^2 \Delta -1$ with principal symbol $p = | \xi |^2 -1$.  As we will see, if $\Sigma$ is the boundary of an open submanifold, the
quantum flux norm vanishes as $h \to 0$, so we cannot immediately draw
a conclusion about the quantum flux norm if $\Sigma = S^*_H X$, which
is the case studied in \cite{CTZ}.  However, by associating a
quantum flux norm to the incoming/outgoing directions in $S^*_H X$
(that is, the right and left hemispheres in the cosphere bundle
restricted to $H$), we are nevertheless able to recover the main
result in \cite{CTZ}.  This is summarized in the following
Corollary.

\begin{corollary}

\label{C:cor}
Suppose $X$ is a compact Riemannian manifold with or without boundary, $P = -h^2 \Delta
-1$ is the (Dirichlet) Laplacian on $X$, and 
$H \subset
X$ is a smooth, codimension $1$ relatively open submanifold.  If
$\{ u_j \}$ is an orthonormal sequence of quantum ergodic
eigenfunctions $Pu_j = 0$ and $a^w \in \Psi^0(H)$, then as $h \to 0+$,
\begin{align}
\label{E:rellich-thm}
& \lll hD_\nu a^w u_j, u_j \rrr_{\text{QF}(H+)} + \lll h D_\nu a^w u_j,u_j
\rrr_{\text{QF}(H-)} \\
& \quad \to \frac{4}{\mu(S^*
 X )} \int_{B^*H} a_0(x', \xi') (1 - | \xi' |^2)^{1/2} d \sigma, \notag
\end{align}
where $a_0$ is the principal symbol of $a^w$, 
$\mu$ is the Liouville measure on $S^* X$, and $d \sigma$
is the standard symplectic volume measure on $B^* H$.

\end{corollary}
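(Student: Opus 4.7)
The plan is to apply Theorem \ref{theorem-1} separately to each of the two open hemispheres
\begin{equation*}
H_\pm = \{(x', 0; \xi', \xi_n) \in S^*_H X : \pm \xi_n > 0\}
\end{equation*}
of the cosphere bundle over $H$, in Fermi coordinates $(x', x_n)$ about $H$. The full cross section $S^*_H X$ itself is the common boundary of the two half-cosphere bundles $S^*\{\pm x_n > 0\}$ inside $S^*X$, so its quantum flux norm vanishes in the limit (as the paragraph preceding the corollary indicates). The hemispheres $H_\pm$, on the other hand, are open codimension one submanifolds of $\{p=0\}$ on which the geodesic Hamilton vector field $H_p$ is strictly transverse ($H_p x_n = 2\xi_n$ is of definite sign on each), so Theorem \ref{theorem-1} produces a non-trivial limit for each.

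The first step is to evaluate the right hand side of \eqref{E:QF-1} on each hemisphere. I would choose oriented defining functions $\beta_\pm = \pm x_n$ for $H_\pm$, so that $H_p\beta_\pm > 0$ on $H_\pm$. The symplectic form on $T^*X$ restricts to $S^*_H X$ as the pullback of the canonical form on $T^*H$, whence $(\omega|_{H_\pm})^{n-1} = (n-1)!\, dx' \wedge d\xi'$. Parametrize each hemisphere by $(x', \xi') \in B^*H$ via $\xi_n = \pm(1 - |\xi'|^2)^{1/2}$, and substitute $\sigma_h(hD_\nu a^w) = \xi_n\, a_0(x', \xi')$ into \eqref{E:QF-1}. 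Summing the two contributions (the orientations of $\beta_\pm$ are chosen so they add rather than cancel), a direct geometric calculation yields the claimed right hand side $\tfrac{4}{\mu(S^*X)} \int_{B^*H} a_0(x',\xi')(1 - |\xi'|^2)^{1/2}\, d\sigma$.

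To identify the left hand side, I would unwind the $QF$-inner product through the Rellich-type commutator identity underlying its definition: for $P = -h^2\Delta - 1$ and a cutoff $\chi_\pm$ across $H$ to $\{\pm x_n > 0\}$, the principal part of $\tfrac{i}{h}[P,\chi_\pm]$ is the $\delta$-like expression $2\chi_\pm'(x_n)\cdot hD_\nu$; pairing this with $a^w u_j$ against $u_j$ reproduces precisely the object $\lll hD_\nu a^w u_j, u_j\rrr_{QF(H_\pm)}$ in the corollary's notation, which is then the quantity to which Theorem \ref{theorem-1} applies on each hemisphere.

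The main technical obstacle is that $hD_\nu a^w$ does not literally lie in the microlocally flow-invariant class $\Psi^{0,0}_{h, H_\pm, \text{fi}}(X)$ appearing in Theorem \ref{theorem-1}, since $hD_\nu$ fails to commute with $P$ microlocally near the glancing equator $\{\xi_n = 0\} = S^*H$. Following the more careful route advertised in \S \ref{SS:cor}, I would insert microlocal cutoffs supported away from $S^*H$, apply Theorem \ref{theorem-1} to the truncated operators, and then pass to the limit by removing these cutoffs, checking that the remainder is controlled uniformly in $h$. This exchange of limits, together with the control of the glancing region where the flux is degenerate, is the delicate point of the argument. A subsidiary remark is that, under Dirichlet conditions on $\partial X$, the Rellich identity picks up no boundary contribution, so no extra argument is needed in that case.
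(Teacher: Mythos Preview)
Your strategy matches the paper's: apply Theorem \ref{theorem-1} (or rather its proof mechanism via interior quantum ergodicity) to the two hemispheres $H_\pm$ and add. The paper packages this slightly differently: it first combines the two $QF$-terms into the single Rellich commutator integral
\[
\int_{X_H} \frac{i}{h}[P,\chi(\beta/\epsilon)\psi(P)]\, hD_\nu a^w u_j\, \overline{u_j}\, dx
\]
(this is equation \eqref{key point} in \S\ref{SS:cor}), identifies this as exactly the quantity from \cite[Theorem 1]{CTZ}, and then reads off the limit measure. Your route, computing each hemisphere separately and adding, arrives at the same place.

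One point to correct in your diagnosis of the flow-invariance obstacle: $hD_\nu a^w$ fails the condition $\WF[P,A]=\emptyset$ near $\Sigma$ \emph{everywhere} on $H_\pm$, not just near the glancing equator. In Fermi coordinates the principal symbol $\{p,\xi_n a_0\}$ contains the term $-(\partial_{x_n} r)\,a_0$, which sees the second fundamental form of $H$ and is generically nonzero across the whole hemisphere. Your proposed fix (microlocal cutoffs away from $S^*H$) addresses only the separate requirement $\WF A\cap\partial\Sigma=\emptyset$, not flow-invariance. The paper's actual resolution in \S\ref{SS:cor} is different and simpler: for a QE sequence the flow-invariance hypothesis can be dropped altogether, because the cutoff-independence of the QF product---which is what flow-invariance was designed to guarantee---already follows up to $o(1)$ from interior quantum ergodicity, once one builds the $\epsilon\to 0$ limit into the definition of $\langle\cdot,\cdot\rangle_{QF(H\pm)}$. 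With that understanding, the proof of Theorem \ref{theorem-1} goes through with $A=hD_\nu a^w$ unchanged.
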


These results on quantum ergodicity of Cauchy data are more general but less
precise than the results in \cite{TZ1,TZ2,DZ} on QER for Dirichlet data. These
articles find a condition on a hypersurface $H$ which is sufficient for the restrictions of
eigenfunctions to be quantum ergodic along the hypersurface. The condition is an asymmetry condition on the
hypersurface with respect to geodesics, which we do not make in the
present article.\\

Next
consider the special case where  $X$ is real-analytic and $X^{\mathbb C}$ denote the (maximal) Grauert tube complexification. By a theorem of Bruhat-Whitney, $X^{\mathbb C}$ can be naturally identified $B_{r}^*X$ with maximal $r>0$ under the complexified exponential map. By possibly rescaling $h$,   we assume in the following that $r>1.$ $X^{\mathbb C}$ inherits a natural Kahler structure with Kahler form $\omega$ that is the pull-back of the canonical symplectic form on $B_{r}^*X$ under the complexified exponential map $\kappa: B_r^*X \to X^{\mathbb C}$ where $\kappa(x,\xi) = \exp_{x} (i \xi).$ In this case, 
 $\rho(x,\xi)= \frac{1}{2} |\xi|_{x}^2$ is the associated Kahler potential function with $\partial \bar{\partial} \rho = \omega.$
Moreover, in the following, we let $-h^2 \Delta_{\bar{\partial}}$ denote the Kahler Laplacian corresponding to the natural Kahler structure on the Grauert tube $X^{\mathbb C}.$

Given a separarting hypersurface $ \Sigma  \subset X^{\mathbb C}$, one can define the notion of a {\em 2 microlocal quantum flux (2QF)}   as in (\ref{2QF}) to study the asymptotics of the restrictions to $\Sigma \subset X^{\C}$ of matrix elements of the analytic continuations, $u_h^{\C}$, of the eigenfunctions in the Grauert tube quite explicitly.  In the following, we let $(\beta', \beta): U_{\Sigma} \to {\mathbb R}^{2n}$ be local Fermi coordinates in neighbourhood, $U_{\Sigma},$ of $\Sigma$  (see subsection \ref{FERMI}) with $\Sigma = \{ \beta = 0 \}$ and unit exterior normal $\partial_{\beta}. $ We abuse notation somewhat  and continue to denote this 2-microlocal quantum flux by $\langle \cdot, \cdot \rangle_{2QF(\Sigma)}$ when the context is clear. First, we prove the following 2-microlocal analogue of  Corollary \ref{C:cor}.
\begin{proposition} \label{2microlocal}
Let $X$ be a compact $C^{\omega}$-manifold and $\Sigma \subset X^{\C}$ a compact, separating hypersurface in the Grauert tube with Kahler potential function $\rho(x,\xi):= \frac{1}{2} |\xi|^{2}_x.$ Assume that  $\Sigma \cap S^*X $  is a codimension-one submanifold of $S^*X$  and $\{ u_j \}$ is a 
quantum ergodic sequence of eigenfunctions of $P(h)$, $$P(h) u_j =
E_j(h) u_j$$ with $E_j(h) = o(1)$ and  let $u_j^{\C}$ be the holomorphic continuation to $X^{\C}.$ Then, there exists $q \in S^0$ such that for any  $a \in C^{\infty}_0(X^{\C}),$

 \begin{eqnarray}
\label{2micro}
\langle hD_{\nu}a e^{-\rho/h} u_j^{\C}, e^{-\rho/h} u_j^{\C} \rangle_{2QF(\Sigma^+)}  + \langle e^{-\rho/h} hD_{\nu}  a u_j^{\C},  e^{-\rho/h} u_j^{\C} \rangle_{2QF(\Sigma^-)} \nonumber \\
  \sim_{h \to 0^+} \int_{\Sigma \cap S^*X} a \, q \, d\mu_{\Sigma}
\end{eqnarray}
Here $\lll \cdot, \cdot \rrr_{2QF} = \lll \cdot, \cdot \rrr_{2QF( \Sigma)}$ denotes the $2$-microlocal quantum flux (see (\ref{2QF})
associated with $\Sigma$  and $d\mu_{\Sigma}$ is the measure on $\Sigma \cap S^*X$ induced by Liouville measure on $S^*X.$\end{proposition}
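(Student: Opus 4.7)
The strategy is to reduce the 2-microlocal statement on the Grauert tube to Corollary \ref{C:cor} on the real cotangent bundle $S^*X$ via the FBI/Szegő realization of complexified eigenfunctions described in Section \ref{s:FBIDescription}. The key point is that the map $u_j \mapsto e^{-\rho/h} u_j^{\C}$ is, up to an elliptic semiclassical weight, a (localized) FBI transform intertwining eigenfunctions on $X$ with phase-space coherent states microlocally concentrated on $S^*X \subset X^{\C}$ (under the identification $\kappa$). Matrix elements of operators conjugated by $e^{-\rho/h}$ on $X^{\C}$ therefore correspond, on the density-one quantum ergodic subsequence, to Weyl matrix elements of the $u_j$ on $X$. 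First I would fix Fermi coordinates $(\beta', \beta)$ near $\Sigma$ so that $\Sigma = \{ \beta = 0 \}$ and $\partial_\beta$ is the unit exterior normal, as in subsection \ref{FERMI}. The hypothesis that $\Sigma \cap S^*X$ has codimension one in $S^*X$ guarantees that $\Sigma$ meets $S^*X$ transversally, so that $H := \Sigma \cap S^*X$ is a hypersurface in $S^*X$ to which the Rellich-type identity behind Corollary \ref{C:cor} can be applied.

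Next, I would unpack definition (\ref{2QF}): the 2-microlocal quantum flux on $\Sigma^\pm$ is, in these Fermi coordinates, a commutator pairing in the normal $\beta$ direction against the weighted states $e^{-\rho/h} u_j^{\C}$. Using the FBI intertwining, this commutator converts to the ordinary quantum flux pairing for the normal derivative $h D_\nu$ along $H$ in $S^*X$, weighted by a smooth Jacobian factor coming from the complexified exponential $\kappa(x,\xi) = \exp_x(i\xi)$ and the Kähler structure $\partial \bar{\partial} \rho = \omega$. Applying Corollary \ref{C:cor} with this $H$ then yields a limit of the form
\[
\frac{4}{\mu(S^*X)} \int_{B^*H} a_0 (1-|\xi'|^2)^{1/2} d\sigma,
\]
and pushing this back to $\Sigma \cap S^*X$ absorbs the various scalar factors into a single smooth $q \in S^0$, producing the claimed integral against $d\mu_\Sigma$.

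The main obstacle I anticipate is verifying that the 2-microlocal commutator defining $\lll \cdot, \cdot \rrr_{2QF(\Sigma^\pm)}$ truly agrees, to leading order in $h$, with the ordinary quantum flux $\lll \cdot, \cdot \rrr_{QF(H\pm)}$ after FBI conjugation. This requires a careful stationary phase analysis of the Szegő/Bergman kernel paired against $e^{-\rho/h}$, together with the Egorov-type symbol tracking needed to identify the leading principal symbol of the intertwined operator. A secondary bookkeeping issue is writing the symbol $q$ explicitly enough to verify $q \in S^0$ and exhibit its geometric origin in the Kähler potential $\rho$; the decomposition $\Sigma^+ + \Sigma^-$ is then essential because, just as in Corollary \ref{C:cor}, the two individual fluxes each tend to zero and it is only their sum that recovers the full Rellich identity and hence a nontrivial quantum ergodic limit.
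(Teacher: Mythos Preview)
Your proposal has a structural gap and diverges substantially from the paper's argument. The central misstep is your plan to ``apply Corollary \ref{C:cor} with this $H$'' where $H = \Sigma \cap S^*X$. Corollary \ref{C:cor} concerns a hypersurface $H \subset X$ in the \emph{base} manifold (with the Laplacian $-h^2\Delta - 1$ on $X$); its Rellich identity and the formula $\int_{B^*H} a_0 (1-|\xi'|^2)^{1/2}\, d\sigma$ make no sense for a hypersurface inside the energy shell $S^*X$. If anything, Theorem \ref{theorem-1} is the abstract result that handles cross-sections of $\{p=0\}$, but even that is not what the paper invokes here.

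More importantly, the definition (\ref{2QF}) of the 2-microlocal flux is \emph{not} simply an FBI-conjugated version of the ordinary quantum flux: it is built from the conjugated, non-self-adjoint operator $P_\rho(h) = e^{-\rho/h}(-h^2\Delta_{\bar\partial})e^{\rho/h} = -h^2\Delta_{\bar\partial} - 2h\nabla\rho - h\Delta\rho$, and carries two extra $h\nabla\rho$ correction terms. These gradient terms do not intertwine to anything in the real QF setup; they must be computed on their own. The paper does this via Lemma \ref{sub} (a stationary-phase computation in the appendix producing the symbol $q_1$), and the commutator piece $\frac{i}{h}[P_\rho, A_\epsilon]$ is computed separately to produce $q_2$. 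Your proposal acknowledges that ``careful stationary phase analysis'' is needed but does not recognize that the gradient corrections are the crux, nor that they obstruct a clean reduction to the real case.

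The paper's actual route is a direct Rellich argument \emph{in the Grauert tube}: write $\frac{i}{h}\langle [P_\rho, A_\epsilon] Tu_h, Tu_h\rangle_\Omega$ plus the gradient corrections as boundary terms on $\Sigma$ via Green's formula (equation (\ref{rel3.5})), then use the FBI identity (\ref{key psdo}) to convert the bulk terms into $\langle Op_h(\cdot)u_h, u_h\rangle_{L^2(X)}$ and apply \emph{interior} quantum ergodicity on $X$. The $\epsilon^{-1}\chi'(\beta/\epsilon)$ factor then localizes the Liouville integral to $\Sigma \cap S^*X$ as $\epsilon \to 0$. Proposition \ref{2microlocal} drops out because the boundary side of (\ref{rel3.5}) is $\epsilon$-independent. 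There is no appeal to Corollary \ref{C:cor} anywhere.
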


Finally, using the fact that $-h^2 \Delta_{\bar{\partial}} u_h^{\mathbb C} = 0$, we combine  Proposition \ref{2microlocal} with a suitably adapted Rellich-type commutator argument to prove the following Theorem.
\newpage

\begin{theorem} \label{2microlocalQER}
Under the same assumptions as in Proposition \ref{2microlocal} and letting $$-h^2 \Delta_{\Sigma}: C^{\infty}(\Sigma) \to C^{\infty}(\Sigma)$$ be the Laplacian on $\Sigma$ induced by  the Kahler Laplacian $-h^2 \Delta_{\bar{\partial}},$  it follows that there exists $q \in S^{0}$ such that for any $a \in C^{\infty}_{0}(\Sigma),$

$$   \big\langle   a \,  \big( \, h^2 \Delta_{\Sigma} +  2h \nabla \rho  + h \Delta \rho \big) \, e^{-\rho/h} u_h^{\mathbb C}, \, e^{-\rho/h} u_h^{\mathbb C} \big\rangle_{L^2(\Sigma)}  
 + \big\langle a\, h \partial_{\beta}  \, e^{-\rho/h} u_h^{\mathbb C}, \, h \partial_{\beta} \, e^{-\rho/h} u_h^{\mathbb C}  \big\rangle_{L^2(\Sigma)} \, $$
 $$ \sim_{h \to 0^+} e^{1/h} \int_{\Sigma \cap S^*X} a \, q \, d\mu_{\Sigma}.$$
  \end{theorem}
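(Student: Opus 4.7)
The plan is to derive Theorem \ref{2microlocalQER} from Proposition \ref{2microlocal} through a Rellich-type commutator argument applied to the conjugated K\"ahler Laplacian acting on the weighted complexified eigenfunction $v := e^{-\rho/h} u_h^{\mathbb{C}}$. The starting point is that $-h^2 \Delta_{\bar\partial} u_h^{\mathbb{C}} = 0$ since $u_h^{\mathbb{C}}$ is holomorphic in the Grauert tube. Conjugating by $e^{-\rho/h}$, the operator $L := e^{-\rho/h}(-h^2\Delta_{\bar\partial}) e^{\rho/h}$ annihilates $v$; a direct computation produces $L = -h^2\Delta_{\bar\partial} - 2h\nabla\rho\cdot\nabla - h\Delta\rho + |\nabla\rho|_g^2$ (up to the sign conventions intrinsic to $\Delta_{\bar\partial}$), in which one already recognizes the combination $2h\nabla\rho + h\Delta\rho$ appearing in the statement.

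Next, in the Fermi coordinates $(\beta', \beta)$ of subsection \ref{FERMI}, one splits the K\"ahler Laplacian into its normal and tangential pieces, $-h^2\Delta_{\bar\partial} = -h^2 \partial_\beta^2 - h^2 \Delta_{\Sigma, \beta} + R_1$, where $\Delta_{\Sigma,\beta}$ is a one-parameter family of tangential Laplacians with $\Delta_{\Sigma, 0} = \Delta_\Sigma$ and $R_1$ is a first-order Fermi correction. Substituting into the conjugated identity shows that the "tangential trace" of $L$ on $\Sigma$ is exactly $h^2 \Delta_\Sigma + 2h\nabla\rho + h\Delta\rho$ modulo lower-order curvature terms, while the normal piece contributes $(h\partial_\beta)^2$.

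I would then apply a two-sided Green's identity in the domains $\Omega^{\pm} := \{ \pm \beta > 0 \}$ whose common boundary is $\Sigma$. For $a \in C_0^\infty(\Sigma)$ extended to be $\beta$-independent in a tubular neighbourhood, the identity
\[
\int_{\Omega^{\pm}} a (Lv)\overline{v}\, dV - \int_{\Omega^{\pm}} (L(av))\overline{v}\, dV \;=\; \pm \int_\Sigma \bigl[a(h\partial_\beta v)\overline{v} - av\,(h\partial_\beta \overline{v})\bigr]\, d\sigma + \cdots,
\]
combined with $Lv = 0$, symmetrization of the normal-derivative term, and a second integration by parts, produces on the boundary side exactly the two $L^2(\Sigma)$ quadratic forms that appear on the left-hand side of the theorem: the tangential form $\langle a(h^2\Delta_\Sigma + 2h\nabla\rho + h\Delta\rho) v, v\rangle_{L^2(\Sigma)}$ from the "plus" and "minus" Fermi contributions that combine, and the normal form $\langle a\, h\partial_\beta v,\, h\partial_\beta v \rangle_{L^2(\Sigma)}$ from the quadratic boundary term.

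The remaining bulk commutator $\int_{\Omega^+}\!\![L,a]v\, \overline{v}\, dV + \int_{\Omega^-}\!\![L,a]v\, \overline{v}\, dV$ is, after noting that $[L,a] \equiv \{\sigma(L), a\}/i$ is dominated near $\Sigma$ by its normal-derivative component, precisely the sum of the two $2QF(\Sigma^{\pm})$ quadratic forms appearing in Proposition \ref{2microlocal}. Applying that proposition gives the right-hand side $\int_{\Sigma\cap S^* X} a\, q\, d\mu_\Sigma$, and the overall factor $e^{1/h}$ arises because the 2QF norm incorporates an intrinsic weight of $e^{2\rho/h}$ while the theorem's left-hand side is computed in unweighted $L^2(\Sigma)$, so, on $S^*X$ where $\rho = \tfrac{1}{2}$, one picks up $e^{1/h}$. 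Equating the two expressions yields the theorem. \emph{Main obstacle:} the most delicate step is the careful bookkeeping of the Fermi decomposition and the conjugation error terms, and, crucially, identifying the bulk commutator with the exact linear combination of the two $2QF(\Sigma^{\pm})$ quantities in Proposition \ref{2microlocal}, including the correct $e^{1/h}$ weight normalization between the 2QF pairing and the plain $L^2(\Sigma)$ pairing on the boundary.
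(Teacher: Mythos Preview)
Your overall architecture is correct and matches the paper: conjugate the K\"ahler Laplacian to get $P_\rho$ annihilating $Tu_h=e^{-1/2h}e^{-\rho/h}u_h^{\mathbb C}$, run a Rellich--Green argument over $\Omega$ with $\partial\Omega=\Sigma$, identify the bulk with the 2QF and invoke Proposition \ref{2microlocal}, and observe that $Tu_h=e^{-1/2h}\,v$ accounts for the $e^{1/h}$.

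There is, however, a genuine gap in your choice of test operator. You take the multiplication operator $a$ (extended $\beta$-independently) and apply Green's identity once; the boundary contribution is then of the form
\[
\pm\int_\Sigma\bigl(a\,(h\partial_\beta v)\,\bar v - a\,v\,\overline{h\partial_\beta v}\bigr)\,d\sigma,
\]
which is first order in $h\partial_\beta$ and cannot produce either the quadratic term $\langle a\,h\partial_\beta v,\,h\partial_\beta v\rangle_{L^2(\Sigma)}$ or the tangential Laplacian term in the statement. Your appeal to ``symmetrization'' and ``a second integration by parts'' does not repair this: a second integration by parts in the bulk simply restores the original expression, and on the boundary there is no further normal direction to integrate in. Equally, the resulting bulk term $\int_\Omega[L,a]\,v\,\bar v$ is purely tangential (since $\partial_\beta a=0$) and therefore does not match the 2QF pairing in Proposition \ref{2microlocal}, whose weight is $hD_\nu\,a$, not $a$.

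The paper fixes both issues by taking the test operator $A_\epsilon=\chi(\beta/\epsilon)\,a\,hD_\beta$ (note the built-in normal derivative). Then in Green's formula the two boundary terms are $-\langle h\partial_\beta(A_\epsilon v),v\rangle_\Sigma$ and $\langle A_\epsilon v,h\partial_\beta v\rangle_\Sigma$. The first contains $a\,(h\partial_\beta)^2v$; one now uses the equation $P_\rho v=0$ in Fermi form to replace $(h\partial_\beta)^2$ by $-R(0,\beta',hD_{\beta'})+2h\nabla\rho+h\Delta\rho=h^2\Delta_\Sigma+2h\nabla\rho+h\Delta\rho$, yielding the first $L^2(\Sigma)$ term of the theorem. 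The second boundary term is directly $\langle a\,h\partial_\beta v,\,h\partial_\beta v\rangle_\Sigma$. On the bulk side, $\frac{i}{h}[P_\rho,A_\epsilon]$ now has the correct $hD_\beta$ weight to match the 2QF definition. One further point you miss: since $P_\rho=-h^2\Delta_{\bar\partial}-2h\nabla\rho-h\Delta\rho$ is not the Laplacian, only the $-h^2\Delta_{\bar\partial}$ piece produces boundary terms under Green; the first-order $-2h\nabla\rho$ piece stays in the bulk and is precisely why the 2QF definition \eqref{2QF} carries the extra $\pm\frac{2i}{h}\langle(h\nabla\rho)\cdots\rangle$ correction terms, which are then computed via Lemma \ref{sub}. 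Without the $hD_\beta$ in the test operator and without tracking these gradient corrections, the identification with Proposition \ref{2microlocal} fails.
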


The formula for the symbol $q$ in somewhat cumbersome to write; it is equal to $q_1 + q_2$ where $q_1$  appears in Lemma \ref{sub} (see (\ref{q1term}) in the appendix) and $q_2$ appears in (\ref{q2term}).
One can  view Theorem \ref{2microlocalQER} as the natural 2-microlocal analogue of the QERCD result in \cite{CTZ} applied to the complexified eigenfunctions $u_h^{\mathbb C}$ in the Grauert tube $X^{\mathbb C}.$

\subsection*{Acknowledgements}

This article arose from a joint project with Steve Zelditch that began almost a decade ago. Unfortunately, due to his untimely passing, he did not have an opportunity to see the final version. Both authors wish to acknowledge  his deep insights and  crucial input.

The approach in this article was partly developed while H.C. was still
a graduate student at UC-Berkeley.  He would like to thank his advisor
M. Zworski for many helpful discussions and suggestions for this work.

A portion of the research of H.C. was partially supported by NSF grant \# DMS-0900524;
J.T. was partially supported by NSERC grant \# OGP0170280.


\section{Preliminaries}
\label{preliminaries}

We first define the quantum flux norm for an arbitrary cross section
in $\Sigma$.  We remark that the quantum flux norm has previously been used  in
\cite{ISZ,SjZw-mono,Chr-QMNC} to relate eigenfunctions in a tubular
neighbourhood of a periodic geodesic to their microlocal restrictions
to the Poincar\'e section.  The current use is slightly different, so
we give some details.  
Choose an orientation for $\Sigma$ in $\{p=0\}$
and let $\nu$ be the unit normal field.  Let $\tchi \in \Ci(T^*X)$ be a
cutoff to $\Sigma$ for energies in a neighbourhood of $p = 0$, satisfying the following properties:
\be
&& \text{  (i) } \frac{\partial \tchi}{\partial p} = 0 \text{ for } p
\text{ near }0, \\
&& \text{ (ii) } \tchi \equiv 0 \text{ outside a neighbourhood of } \Sigma \text{ in the direction of } - \nu, \\
&& \text{(iii) } \tchi \equiv 1 \text{ in a neighbourhood of }
\Sigma , 
\ee
Choose also an energy cutoff $\psi \in \Ci_c ( \reals )$, $\psi(t) \equiv 1$
near $0$, $\psi \equiv 0$ away from $0$, and let $\chi = \psi (p) \tchi$.    
For a fixed constant $C>0$, let $\{ u_j \}$ be
the orthonormalized set of eigenfunctions of $P(h)$ satisfying
\be
P(h) u_j = E_j(h) u_j, \,\,\, E_j(h) = o(1),
\ee
and define the quantum flux product on $\{u_j\}$ by
\be
\lll u_j, u_k \rrr_{QF} = \lll \frac{i}{h} [P, \chi ]_+ u_j, u_k
\rrr_{L^2(X)},
\ee
where $[P, \chi]_+$ denotes the part of the commutator supported near
$\Sigma$ in the direction of $\nu$.  See Figure \ref{F:transversal}.
  \begin{figure}
\hfill
\centerline{
\begingroup%
  \makeatletter%
  \providecommand\color[2][]{%
    \errmessage{(Inkscape) Color is used for the text in Inkscape, but the package 'color.sty' is not loaded}%
    \renewcommand\color[2][]{}%
  }%
  \providecommand\transparent[1]{%
    \errmessage{(Inkscape) Transparency is used (non-zero) for the text in Inkscape, but the package 'transparent.sty' is not loaded}%
    \renewcommand\transparent[1]{}%
  }%
  \providecommand\rotatebox[2]{#2}%
  \newcommand*\fsize{\dimexpr\f@size pt\relax}%
  \newcommand*\lineheight[1]{\fontsize{\fsize}{#1\fsize}\selectfont}%
  \ifx\svgwidth\undefined%
    \setlength{\unitlength}{194.21146774bp}%
    \ifx\svgscale\undefined%
      \relax%
    \else%
      \setlength{\unitlength}{\unitlength * \real{\svgscale}}%
    \fi%
  \else%
    \setlength{\unitlength}{\svgwidth}%
  \fi%
  \global\let\svgwidth\undefined%
  \global\let\svgscale\undefined%
  \makeatother%
  \begin{picture}(1,0.98230949)%
    \lineheight{1}%
    \setlength\tabcolsep{0pt}%
    \put(0,0){\includegraphics[width=\unitlength,page=1]{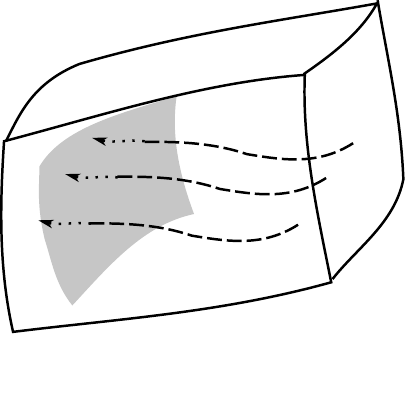}}%
    \put(0.6726567,0.17932606){\color[rgb]{0,0,0}\makebox(0,0)[lt]{\lineheight{1.25}\smash{\begin{tabular}[t]{l}$p^{-1}(0)$\end{tabular}}}}%
    \put(0.03708621,0.34032218){\color[rgb]{0,0,0}\makebox(0,0)[lt]{\lineheight{1.25}\smash{\begin{tabular}[t]{l}$\Sigma$\end{tabular}}}}%
    \put(0,0){\includegraphics[width=\unitlength,page=2]{Sigma-transversal-1.pdf}}%
    \put(0.56735141,0.05510502){\color[rgb]{0,0,0}\makebox(0,0)[lt]{\lineheight{1.25}\smash{\begin{tabular}[t]{l}$\chi$\end{tabular}}}}%
  \end{picture}%
\endgroup%
}
\caption{\label{F:transversal} A  sketch of the energy slab $\{ p = 0 \}$ with a hypersurface $\Sigma \subset \{ p = 0 \}$ which is transversal to the $H_p$ flow.  The part of the function $\chi$ 
near $\Sigma$ is sketched below.}
\hfill
\end{figure}

For concreteness, if $\Sigma$ is symplectic, in local symplectic coordinates, we can assume the symbol of $p$ is $p = \xi_1$ and, further introducing Fermi coordinates, our hypersurface $\Sigma \subseteq \{ x_1 = 0 \}$.  Then our function $\chi$ can be chosen of the form
\[
\chi = \tchi(x_1) \psi(\xi_1).
\]
The purpose of this is that $\psi$ localizes to energes $p$ near $0$.  The function $\tchi(x_1)$ has the property that near $\Sigma$, $[P, \tchi(x_1)] = P \tchi(x_1) = \tchi'(x_1)$ approximates a Dirac mass on $\Sigma$.

\begin{lemma}
\label{qf-lemma}
The quantum flux product $\lll \cdot, \cdot \rrr_{QF}$ induces a norm
which does not depend on
the choice of $\chi$, modulo $\O(h^\infty)$ as $h \to 0$.  Furthermore, if $\Sigma$ is
the boundary of an open submanifold of $\{p=0\}$, then $\lll u_j, u_j
\rrr_{QF} = \O(h^\infty)$, as $h \to 0$.
\end{lemma}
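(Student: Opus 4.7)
The proof rests on the self-adjointness of $P$ combined with $u_j$ being an eigenfunction, which yields the exact diagonal identity
\begin{equation*}
\lll [P, \chi] u_j, u_j \rrr = \lll \chi u_j, P u_j \rrr - \lll \chi P u_j, u_j \rrr = E_j \lll \chi u_j, u_j \rrr - E_j \lll \chi u_j, u_j \rrr = 0
\end{equation*}
for any admissible real-valued (Weyl-quantized) cutoff $\chi$. Hence the \emph{full} commutator inner product vanishes exactly on the diagonal, and the QF product detects only the flux imbalance between the two sides of $\Sigma$ isolated by the one-sided localization $[P, \chi]_+$. This vanishing identity is the engine that drives everything.

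For the independence and norm claims, I would take two admissible cutoffs $\chi_1, \chi_2$; by condition (iii) their difference $\eta = \chi_1 - \chi_2$ vanishes in a neighborhood of $\Sigma$, so $H_p \eta$ is supported away from $\Sigma$. Since $[P, \eta]_+$ is by definition localized near $\Sigma$ in the $\nu$-direction, it is microlocally trivial in that neighborhood; combined with the wavefront inclusion $\WF u_j \subset \{p=0\}$ (from the elliptic parametrix for $P-E_j$, using $E_j = o(1)$) and propagation of singularities along $H_p$, this yields $\lll \frac{i}{h}[P, \eta]_+ u_j, u_j \rrr = \O(h^\infty)$. Symmetry of the QF form follows from self-adjointness of $\frac{i}{h}[P, \chi]_+$, and non-negativity on eigenstates modulo $\O(h)$ is verified by passing to local symplectic coordinates in which $p = \xi_n$, $\Sigma = \{x_n = 0\}$, and $\chi = \tchi(x_n)\psi(\xi_n)$: the principal symbol $H_p \chi = \tchi'(x_n)\psi(\xi_n)$ has a definite sign on the $+\nu$ transition region, giving the required positivity.

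For the vanishing assertion, suppose $\Sigma = \partial \Omega$ for an open $\Omega \subset \{p=0\}$ (say with $\Omega$ on the $+\nu$ side of $\Sigma$). The plan is to construct a particular admissible cutoff whose $+\nu$ transition is pushed off to infinity in the sense of $\Sigma$: choose $\chi$ extending the condition "$\chi \equiv 1$ near $\Sigma$" across all of $\Omega$, so that any transition of $\chi$ in the $+\nu$ direction occurs far from $\Sigma$ (past $\Omega$). Then the "supported near $\Sigma$" clause in the definition of $[P, \chi]_+$ forces the $+$-piece to be microlocally trivial in a tubular neighborhood of $\Sigma$, giving $\lll u_j, u_j \rrr_{QF} = \O(h^\infty)$ for this particular $\chi$; the independence established above then transfers the vanishing to every admissible cutoff. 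The main obstacle is making the $+/-$ decomposition $[P, \chi] = [P, \chi]_+ + [P, \chi]_-$ precise and canonical modulo $\O(h^\infty)$, since it is not intrinsically defined at the operator level but requires a microlocal cutoff isolating the $+\nu$ and $-\nu$ halves of a tubular neighborhood of $\Sigma$; I would handle this with a symplectic normal form for $p$ near $\Sigma$ (the coordinates used above) together with a microlocal partition of unity, with the orientation of $\nu$ dictating which piece is labeled $+$.
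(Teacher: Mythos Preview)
You correctly isolate the identity $\langle \frac{i}{h}[P,\chi] u_j, u_j\rangle = 0$ as the key mechanism, and that is exactly what drives the paper's proof. But you do not deploy it where it counts. For independence you instead argue that $\eta = \chi_1 - \chi_2$ vanishes in a neighborhood of $\Sigma$, and hence that $[P,\eta]_+$ is microlocally trivial. The two invocations of ``near $\Sigma$'' here are at incompatible scales: condition (iii) only guarantees $\eta = 0$ on the inner set where both $\tchi_i \equiv 1$, whereas the $[\,\cdot\,]_+$ localization is to a larger tubular region that contains the \emph{transition zones} of the cutoffs, and it is precisely there that $H_p\eta$ is supported. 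Your appeal to $\WF u_j \subset \{p=0\}$ and propagation does not close this gap, since those transition zones meet $\{p=0\}$.

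The paper's argument short-circuits this entirely. Since $[\,\cdot\,]_+$ only sees the behavior of $\chi$ near $\Sigma$, one may without loss of generality take $\chi_1 = \chi_2$ away from $\Sigma$; then $[P,\eta]_+ = [P,\eta] + \O(h^\infty)$ is the \emph{full} commutator, and your eigenfunction identity applies directly to give $\O(h^\infty)$. The boundary case is handled by the same device: the paper chooses $\chi \equiv 0$ on the open submanifold $S$ and $\chi \equiv 1$ outside a neighborhood of $S$, so that once again $[P,\chi]_+ = [P,\chi]$, and the identity finishes. Your alternative for the boundary case---extending $\chi \equiv 1$ across $\Omega$ to eliminate any $+\nu$-side transition---can be made to work under a suitable reading of $[\,\cdot\,]_+$, but it is not the paper's route and in any event leaves the independence step unrepaired.
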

\begin{proof}
If $\chi_2$ is a cutoff also satisfying the conditions
above, then
\be
[P, \chi - \chi_2]_+ = [P, \chi-\chi_2] + \O(h^\infty),
\ee
since we may take $\chi$ and $\chi_2$ to be equal away from $\Sigma$.  See Figure \ref{F:lemma}.
 \begin{figure}
\hfill
\centerline{
\begingroup%
  \makeatletter%
  \providecommand\color[2][]{%
    \errmessage{(Inkscape) Color is used for the text in Inkscape, but the package 'color.sty' is not loaded}%
    \renewcommand\color[2][]{}%
  }%
  \providecommand\transparent[1]{%
    \errmessage{(Inkscape) Transparency is used (non-zero) for the text in Inkscape, but the package 'transparent.sty' is not loaded}%
    \renewcommand\transparent[1]{}%
  }%
  \providecommand\rotatebox[2]{#2}%
  \newcommand*\fsize{\dimexpr\f@size pt\relax}%
  \newcommand*\lineheight[1]{\fontsize{\fsize}{#1\fsize}\selectfont}%
  \ifx\svgwidth\undefined%
    \setlength{\unitlength}{238.02420616bp}%
    \ifx\svgscale\undefined%
      \relax%
    \else%
      \setlength{\unitlength}{\unitlength * \real{\svgscale}}%
    \fi%
  \else%
    \setlength{\unitlength}{\svgwidth}%
  \fi%
  \global\let\svgwidth\undefined%
  \global\let\svgscale\undefined%
  \makeatother%
  \begin{picture}(1,0.28098257)%
    \lineheight{1}%
    \setlength\tabcolsep{0pt}%
    \put(0,0){\includegraphics[width=\unitlength,page=1]{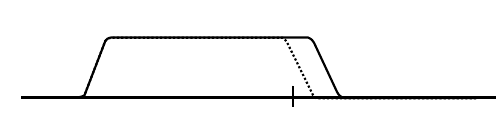}}%
    \put(-0.00112912,0.22631531){\color[rgb]{0,0,0}\makebox(0,0)[lt]{\lineheight{1.25}\smash{\begin{tabular}[t]{l}$\chi =\chi_2$\end{tabular}}}}%
    \put(0.52681514,0.01229877){\color[rgb]{0,0,0}\makebox(0,0)[lt]{\lineheight{1.25}\smash{\begin{tabular}[t]{l}$\Sigma$\end{tabular}}}}%
  \end{picture}%
\endgroup%
}
\caption{\label{F:lemma} A  sketch of two functions $\chi$ and $\chi_2$ satisfying the assumptions of the definition.  Since the definition of $[\cdot, \cdot]_+$ only uses the part of $\chi$ or $\chi_2$ near $\Sigma$, without loss in generality $\chi = \chi_2$ away from $\Sigma$.}
\hfill
\end{figure}
We then have, using $P^* = P$,
\be
\lefteqn{ \lll  \frac{i}{h} [P, \chi - \chi_2]_+ u_j, u_j
\rrr} \\
 & = & \lll \frac{i}{h} P (\chi - \chi_2) u_j, u_j
\rrr \\
&& \quad - \frac{i}{h} E_j(h) \lll   (\chi - \chi_2)
u_j, u_j \rrr + \O(h^\infty)\\
& = &  \frac{i}{h} (E_j(h) - E_j(h)) \lll (\chi - \chi_2)
u_j, u_j \rrr + \O(h^\infty) \\
& = & \O(h^\infty).
\ee

If $\Sigma$ is the boundary of an open submanifold $S$, we can choose
$\chi \equiv 0$ on $S$, $\chi \equiv 1$ outside a neighbourhood of
$S$.  Then $ [P, \chi ]_+ = [P, \chi]$ and a similar calculation shows $\lll u_j, u_j
\rrr_{QF} = \O(h^\infty)$ for all $j$, as $h \to 0$.
\end{proof}


In the main theorem, we are interested in symbols which are microlocally
restricted to $\Sigma$.  If the $H_p$ flow is transversal to $\Sigma$ in a 
neighbourhood of a point $m \in \Sigma$, this restriction is
independent of the values of a symbol $a$ in the direction of the
flow.  If the flow is tangential to $\Sigma$ at a point $m \in \Sigma$, then the limit measure
is zero at $m$ anyway.  Hence in this case we lose nothing by assuming $a$ is flow
invariant close to $\Sigma$.  This motivates the following definition:
\ben
\Psi_{h, \Sigma, \text{fi}}^{0,0}(X) & = & \{ A \in \Psi_{h}^{0,0}(X) :
\WF A \text{ is close to } \Sigma, \,\, \label{E:fi-def}\\
&& \quad \WF A
\cap \partial \Sigma = \emptyset, \text{ and } \WF [P,A] = \emptyset \text{ near } \Sigma \}.
\een
The point is that such operators have well-defined restrictions to
$\Sigma$ and commute with $P$ microlocally so that $\lll A u_j, u_j
\rrr_{QF}$ is still meaningful. Specifically, for $A \in \Psi_{h, \Sigma, \text{fi}}^{0,0}(X)$ we define

\begin{equation} \label{AQF}
\langle A u_j, u_j \rangle_{QF} := \lll \frac{i}{h} [P, \chi ]_+  A u_j, u_k
\rrr_{L^2(X)}, 
\end{equation}


\section{Proof of Theorem \ref{theorem-1}}

In this section we prove Theorem \ref{theorem-1}.  Of course
all of the calculations are trivial if every quantity goes
to zero, so in
subsection \ref{example-section} we provide an example where the
quantum flux norm does not vanish.

\subsection{Proof of Theorem \ref{theorem-1}}

The proof of Theorem \ref{theorem-1} follows immediately from the
following proposition and a consideration of orientation.

\begin{proposition}
\label{prop-1}
Let $\{ u_j \}$ be an orthonormal sequence of quantum ergodic
eigenfunctions of $P(h)$,
$P(h) u_j = E_j(h) u_j$, $|E_j(h)| = o(1)$.
Assume $A \in \Psi_h^{0,0}(X)$, $\WF A$ is close to $\Sigma$, $\WF A
\cap \partial \Sigma = \emptyset$ if $\partial \Sigma \neq \emptyset$,
and $\WF [P,A]$ is away from $\Sigma$.  Then there exists a signed Radon measure
$\alpha$ on $\Sigma$ such that 
\be
\lll A u_j, u_j \rrr_{QF} \rightarrow \frac{1}{\mu ( \{p=0\})} \int_\Sigma
\sigma_h(A) d \alpha.
\ee
Here $\lll \cdot, \cdot \rrr_{QF} = \lll \cdot, \cdot \rrr_{QF( \Sigma)}$ denotes the quantum flux norm
associated to $\Sigma$ and $\mu$ is the Liouville measure on $\{p=0\}$.
\end{proposition}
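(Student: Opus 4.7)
The plan is to recognize the quantum flux product on the left-hand side of \eqref{AQF} as a single semiclassical matrix element $\langle B u_j, u_j\rangle$ with $B := \tfrac{i}{h}[P,\chi]_+ A \in \Psi_h^{0,0}(X)$, and then invoke the quantum ergodic theorem for $B$ directly. By the definition \eqref{AQF}, $\langle A u_j, u_j\rangle_{QF}$ is exactly such a matrix element. Writing $\chi = \psi(p)\tchi$ and using $\{p,\psi(p)\}=0$, the semiclassical principal symbol of $B$ is
\[
\sigma(B) = (H_p\chi)_+\, \sigma_h(A) = \psi(p)(H_p\tchi)_+\, \sigma_h(A),
\]
which is compactly microsupported in a neighborhood of $\Sigma\cap\{p\approx 0\}$. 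The hypothesis that $\WF[P,A]$ is disjoint from $\Sigma$ is precisely what is needed so that reordering $A$ past $\tfrac{i}{h}[P,\chi]_+$ and any lower-order corrections do not change this principal symbol microlocally near $\Sigma$.

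Applying quantum ergodicity to $B$ along the density-one subsequence $\scal$, and noting that $\psi \equiv 1$ near $\{p=0\}$, yields
\[
\lim_{h_j\in\scal}\langle A u_j,u_j\rangle_{QF}
= \frac{1}{\mu(\{p=0\})}\int_{\{p=0\}}(H_p\tchi)_+\, \sigma_h(A)\, d\mu_p.
\]
The proposition is then reduced to the purely symbolic claim that this bulk integral on the energy surface can be rewritten as $\int_\Sigma \sigma_h(A)\, d\alpha$ for a signed Radon measure $\alpha$ depending only on the geometry. To produce $\alpha$, I would exploit the cutoff-independence given by Lemma~\ref{qf-lemma} to take $\tchi$ as a function only of a normalized defining function $\beta$ of $\Sigma$ (so $\Sigma=\{\beta=0\}$ and $\nabla\beta=\nu$). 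Then $H_p\tchi = \tchi'(\beta)\,H_p\beta$, and the coarea decomposition of Liouville measure along $\beta$ turns the bulk integral, in the sharp-cutoff limit, into
\[
\int_{\{p=0\}}(H_p\tchi)_+\,\sigma_h(A)\,d\mu_p
\longrightarrow \int_\Sigma \sigma_h(A)\,(H_p\beta)\big|_\Sigma\, d\mu_\Sigma,
\]
from which one reads off $d\alpha = (H_p\beta)|_\Sigma\, d\mu_\Sigma$; the normalization constants $(n-1)!$ and $(\omega|_\Sigma)^{n-1}$ of Theorem~\ref{theorem-1} then come from unpacking the induced measure $d\mu_\Sigma$ in terms of the symplectic volume.

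The principal obstacle is justifying that sharp-cutoff step: for each fixed $h$ the symbol $(H_p\tchi)_+$ depends on the specific $\tchi$, and only the $h\to 0$ limit is cutoff-independent. The clean way to handle this is to avoid any distributional limit. Fix a smooth $\tchi$ once and compute the bulk integral exactly via the coarea formula (the integrand is smooth and supported in a tubular neighborhood of $\Sigma$); then Lemma~\ref{qf-lemma} forces the resulting value to be independent of $\tchi$, which pins down the contribution of the transverse factor $\tchi'$ (whose total integral is $\pm 1$ depending on orientation). The sign choice is exactly the ``$+$'' convention in $[P,\chi]_+$, and is what makes $\alpha$ genuinely \emph{signed} rather than a positive measure. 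A secondary care point is the locus where $H_p$ is tangent to $\Sigma$, but there $(H_p\beta)|_\Sigma=0$, so such points contribute nothing and $\alpha$ is well-defined on all of $\Sigma$ regardless of whether the flow is everywhere transverse.
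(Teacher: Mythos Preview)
Your proposal is correct and follows essentially the same route as the paper: recognize $\langle A u_j,u_j\rangle_{QF}$ as the matrix element of $B=\tfrac{i}{h}[P,\chi]_+A$, apply interior quantum ergodicity to $B$, then reduce the resulting bulk integral on $\{p=0\}$ to a surface integral on $\Sigma$ by taking $\chi=\tchi(\beta/\epsilon)$ and letting $\epsilon\to 0$. The paper organizes the last step slightly differently---it introduces a partition of unity and splits explicitly into coordinate patches where $H_p\beta\neq 0$ versus $H_p\beta=0$, handling each by a direct local computation---whereas you phrase it via the coarea formula and invoke Lemma~\ref{qf-lemma} to justify cutoff-independence; but the content is the same, and your observation that the tangent locus contributes nothing because $(H_p\beta)|_\Sigma=0$ there is exactly the paper's Case~2.
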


Choose an orientation for $\Sigma$, and let $\nu$ be the normal field
to $\Sigma$ in $\{p=0\}$.  Let $\beta \in \Ci(T^*X)$ be a defining
function for $\Sigma$ which is increasing in the direction of $\nu$, normalized by $\| \nabla \beta|_\Sigma \|= 1$.
We define the measure $\alpha$ on $\Sigma$ by the following:
\be
d \alpha = \left\{ \begin{array}{l} \frac{1}{(n-1)!} (H_p \beta)|_\Sigma
    (\omega|_\Sigma)^{n-1}, \,\,\, H_p \beta \neq 0, \\
    0, \,\,\, H_p \beta = 0, \end{array} \right. 
\ee
where $\omega$ is the symplectic form on $T^*X$. 
The definition of 
$\alpha$ does not depend on any choice of coordinates, and is
independent of the choice of $\beta$ up to the choice of orientation
for $\Sigma$.

\begin{proof}[Proof of Proposition \ref{prop-1}]
The first reduction is to assume $\mu ( \{p=0\})=1$, which can be
brought about by multiplying $p$ by a constant.  The statement of the proof is invariant under coordinate changes.
Hence, with the aid of a partition of unity, it suffices to prove the
theorem in local coordinates.  Taking a refinement of the partition,
if necessary, we assume we are working in a coordinate patch in which
$H_p \beta$ does not change sign.  There are two cases to consider.

{\bf Case 1: $H_p \beta \neq 0$ on a coordinate patch.}  In this case, taking a further
refinement if necesssary, and recalling that the quantum flux norm is independent of the choice of $\chi$, we take $\chi = \tchi ( \beta / \epsilon)$, $\epsilon>0$ for $\tchi$ satisfying the assumptions of $\chi$.  We write locally near $\Sigma$,
\[
\sigma_h\left(\frac{i}{h}[P, \chi]_+\right) =  \frac{1}{\epsilon} \tchi'(\beta/\epsilon) H_p(\beta), 
\]
and
\be
 \int_{\{p=0\}}
  \sigma_h\left(\frac{i}{h}[P, \chi]_+ A\right) d \mu & = &
 \int_{-C\epsilon}^{C \epsilon} \int_\Sigma \frac{1}{\epsilon} \tchi'(\beta/\epsilon) H_p(\beta) a (x, \xi) d \mu + O(h)\\
& = & \frac{1}{(n-1)!} \int_\Sigma a (H_p \beta|_\Sigma)|(\omega|_\Sigma)^{n-1}| + \O( h
+ \epsilon ).
\ee
Taking the limit as $\epsilon \to 0$ proves the proposition in the case $H_p \beta \neq 0$.

{\bf Case 2: $H_p \beta = 0$ on a coordinate patch.}  If $H_p \beta = 0$ as in Figure \ref{F:tangential}, we take the same
coordinate system as in Case 1, but after a linear symplectic change
of variables, $\Sigma \subset \{ x_2 = \xi_1
=0 \}$, say.  Taking $\chi = \chi(x_2)$ in a neighbourhood of $\WF A$
yields
\be
\sigma_h\left(\frac{i}{h}[P, \chi]_+\right) & = & \partial_{x_1}
\chi(x_2) \\
& = & 0.
\ee
  \begin{figure}
\hfill
\centerline{
\begingroup%
  \makeatletter%
  \providecommand\color[2][]{%
    \errmessage{(Inkscape) Color is used for the text in Inkscape, but the package 'color.sty' is not loaded}%
    \renewcommand\color[2][]{}%
  }%
  \providecommand\transparent[1]{%
    \errmessage{(Inkscape) Transparency is used (non-zero) for the text in Inkscape, but the package 'transparent.sty' is not loaded}%
    \renewcommand\transparent[1]{}%
  }%
  \providecommand\rotatebox[2]{#2}%
  \newcommand*\fsize{\dimexpr\f@size pt\relax}%
  \newcommand*\lineheight[1]{\fontsize{\fsize}{#1\fsize}\selectfont}%
  \ifx\svgwidth\undefined%
    \setlength{\unitlength}{194.21146774bp}%
    \ifx\svgscale\undefined%
      \relax%
    \else%
      \setlength{\unitlength}{\unitlength * \real{\svgscale}}%
    \fi%
  \else%
    \setlength{\unitlength}{\svgwidth}%
  \fi%
  \global\let\svgwidth\undefined%
  \global\let\svgscale\undefined%
  \makeatother%
  \begin{picture}(1,0.98230949)%
    \lineheight{1}%
    \setlength\tabcolsep{0pt}%
    \put(0,0){\includegraphics[width=\unitlength,page=1]{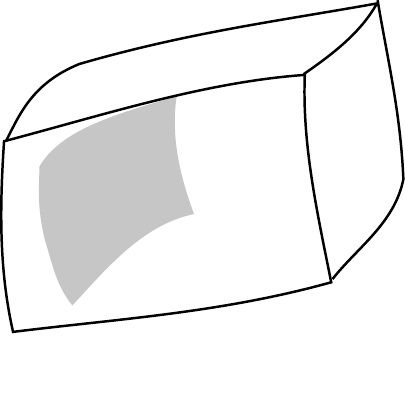}}%
    \put(0.6726567,0.17932606){\color[rgb]{0,0,0}\makebox(0,0)[lt]{\lineheight{1.25}\smash{\begin{tabular}[t]{l}$p^{-1}(0)$\end{tabular}}}}%
    \put(0.07460454,0.2802929){\color[rgb]{0,0,0}\makebox(0,0)[lt]{\lineheight{1.25}\smash{\begin{tabular}[t]{l}$\Sigma$\end{tabular}}}}%
    \put(0,0){\includegraphics[width=\unitlength,page=2]{Sigma-tangential-1.pdf}}%
    \put(0.56735141,0.05510502){\color[rgb]{0,0,0}\makebox(0,0)[lt]{\lineheight{1.25}\smash{\begin{tabular}[t]{l}$\chi$\end{tabular}}}}%
    \put(0,0){\includegraphics[width=\unitlength,page=3]{Sigma-tangential-1.pdf}}%
    \put(0.37049324,0.33980839){\color[rgb]{0,0,0}\makebox(0,0)[lt]{\lineheight{1.25}\smash{\begin{tabular}[t]{l}$H_p \chi \sim 0$\end{tabular}}}}%
  \end{picture}%
\endgroup%
}
\caption{\label{F:tangential} A  sketch of the energy slab $\{ p = 0 \}$ with a hypersurface $\Sigma \subset \{ p = 0 \}$ which is tangential to the $H_p$ flow.  The part of the function $\chi$ 
near $\Sigma$ is sketched below.  In this case $H_p \chi = 0$ so the quantum flux is trivial.}
\hfill
\end{figure}

The proof of the proposition is concluded by applying the conclusion
of the interior quantum ergodicity theorem  with $\frac{i}{h}[P, \chi]_+
  A$ in place of $A$.
\end{proof}

\begin{remark}
To get the signed measure $\alpha$ on all of $\Sigma$, a combination of the two cases above is used.  Using a globally defined $\beta$ fixes the sign of $H_p \beta$ on each connected component of $\{ H_p \beta \neq 0 \}$.

\end{remark}


\section{Some Examples}
\label{example-section}
In this section, we first provide an example of a symbol $p$ satisfying the
assumptions of Theorem \ref{theorem-1} whose energy
surface $\{p=0\}$ admits a cross section $\Sigma\subset \{p=0\}$ with
nontrival quantum flux product.  
We then revisit the definition of the quantum flux norm in the case of
Laplace eigenfunctions in a flat Euclidean domain and demonstrate that
our main result reproves \cite[Theorem 1]{CTZ}.

\subsection{Example of non-trivial quantum flux}
Let us first provide an example for Theorem \ref{theorem-1}.  We know from Lemma \ref{qf-lemma}
that $\Sigma$ must be chosen differently from the boundary of an open
submanifold.  

Let $X = \TT^2$ which for simplicity, we give the coordinates
$(x_1,x_2) \in [-1,1] \times [-1,1] = \reals^2 / 2 \ZZ^2$.  Let $V \in
\Ci_c(X)$, $0 \leq V \leq 1$, be a radial bump potential with support
contained in $B(0,1/4)$.  Then for $E \sim 1/2$, $p = \xi_1^2 +
\xi_2^2 + V(x) - E$ satisfies the assumptions of Theorem
\ref{theorem-1}.  Let 
\be
\Sigma = \left\{ x_1 = 1/2,\, -1 \leq x_2 \leq 1,\, \xi_1 = \sqrt{ E -
  \xi_2^2}, \,0 \leq \xi_2 \leq E^{1/2} \right\},
\ee
so that
\be
\omega|_\Sigma & = & (d\xi_1 \wedge dx_1 + d \xi_2 \wedge dx_2
)|_\Sigma \\
& = & d \xi_2 \wedge dx_2.
\ee
Observe that in a neighbourhood of $\Sigma$, 
\be
H_p = 2 \xi_1 \partial_{x_1} + 2 \xi_2 \partial_{x_2},
\ee
and a choice of $\beta$ is $\beta = x_1 - 1/2$,
for simplicity we take, for example, 
\be
a = \sigma_h(A) = g(\xi_2).
\ee
To ensure $\WF A$ does not meet $\partial \Sigma$, we take $g$
compactly supported in the interval, say, $E^{1/2}/4 \leq \xi_2 \leq 3
E^{1/2}/4$.  If $g \geq 0$, $g>0$ on some interval, we calculate
\be
\int_\Sigma a (H_p \beta) \omega|_{\Sigma} & = & 2 \int_{x_2 = -1}^{x_2 = 1}
\int_{\xi_2 = 0}^{\xi_2 = E^{1/2}} g( \xi_2) d \xi_2 dx_2 \\
& >& 0.
\ee


\subsection{Laplace Eigenfunctions and the Proof of Corollary \ref{C:cor}}
\label{SS:cor}

In this subsection, we re-interpret \cite[Theorem 1]{CTZ} in terms of
the quantum flux norm.  Let $X$ be a compact Riemannian manifold with
or without boundary.
We want to define a reasonable quantum flux norm which is
related to the computation in \cite[Theorem 1]{CTZ}.  In this case, we let $P =
-h^2 \Delta -1$, so that $\{ p = 0 \} = S^* X$.  We assume
throughout this subsection that $u_j$ is a quantum ergodic sequence
of orthonormalized eigenfunctions $P u_j = 0$.

If $H \subset
X$ is the boundary of an open submanifold $X_H \subset
X$, we cannot just blindly take $\Sigma = S^*_H X \subset
\{p = 0 \}$ as our codimension $1$ submanifold of $\{p = 0 \}$, since
it is the boundary of $S^* X_H$ in $ \{ p = 0 \}$ and by Lemma
\ref{qf-lemma} the quantum flux norm must then vanish.  Instead, we
define the incoming/outgoing quantum flux norms in the following
fashion.  We will work locally in normal coordinates to $H$ as usual.
Then $H = \{ x_n = 0 \}$, and $-h^2 \Delta = (hD_n)^2 + R(x', x_n, hD')$.  We
define the outgoing Quantum Flux norm, which is associated to
$S^*_{H+} X = \{(x, \xi) \in S^*_H X  : \xi_n >0 \}$.
The
construction of the quantum flux norm above requires a cutoff
$\chi(x_n/\epsilon)$ which is $0$ for $x_n< - \epsilon $ and $1$ for $x_n \geq 0$, as well
as a cutoff $\psi = \psi(p)$ to a neighbourhood of the set $\{p = 0 \}$.  Then
if $(i/h) [P, \chi \psi]_+\mathbbm{1}_{\xi_n >0 }$ is the part of the
commutator supported in the
direction of increasing $x_n$, and associated to {\it positive} normal
directions ($\xi_n >0$), $(i/h)[P,\chi \psi ]_+\mathbbm{1}_{\xi_n >0 } =
(i/h)[P, \chi \psi ]  \mathbbm{1}_{\xi_n>0} \tchi( x_n/\delta)$, where
$\delta> \epsilon >0$ is a small parameter and $\tchi = 1$ in a
neighbourhood of $0$.  We define the outgoing/incoming quantum flux norms
associated to $H$ by
\begin{equation} \label{QEQF1}
\lll u_j, u_j \rrr_{\text{QF}(H\pm)} := \lim_{\epsilon \to 0} \lll \frac{i}{h}
[P, \chi(x_n/\epsilon)]  \mathbbm{1}_{\pm \xi_n>0} \tchi( x_n/\delta) u_j, u_j \rrr.
\end{equation}

Our first observation is
that in the very special case that the sequence of eigenfunctions is
quantum ergodic, it is no longer necessary to assume the
pseudodifferential weight $A$ is flow-independent, at the expense that
the quantum flux norm is no longer independent of the choice of
$\chi$.  However, this is not a problem if we interpret the quantum
flux norm with a limit as $\epsilon \to 0$ as used in the proofs of
Theorems \ref{theorem-1} in the present article and \cite[Theorem 1]{CTZ}.  That is, with the
outgoing/incoming quantum flux norms defined above, if
$\chi_2(x_n/\epsilon)$ satisfies the same assumptions as
$\chi(x_n/\epsilon)$, then the commutator
\[
(i/h) [ P, \chi(x_n/\epsilon) - \chi_2(x_n/\epsilon)]_+ 
\]
is an honest commutator, which is supported in a region of size $ \epsilon$ in the $x$-space,
and by integration by parts, and interior quantum ergodicity we have
\[
  \lim_{\epsilon \to 0} \lll (i/h) [ P, \chi(x_n/\epsilon) - \chi_2(x_n/\epsilon)]_+ A u_j,
u_j \rrr = o(1)
\]
as $h \to 0$ independent of the choice of $A \in \Psi^0(H)$.

Written out in a coordinate independent fashion, we write
\[
\lll u_j, u_j \rrr_{\text{QF}(H\pm)} = \lll \frac{i}{h} [P,
\chi(\beta/\epsilon) \psi(P)]_+ \mathbbm{1}_{\pm h D_\nu} u_j, u_j \rrr,
\]
where $\beta$ is a normalized defining function for $H$ in the
$x$-space, and $\nu = \nabla\beta$ is the outward pointing unit normal
field in the $x$-space.

We observe that $\pm H_p \beta \mathbbm{1}_{\pm \xi_\nu >0} >0$, so
that we get
\[
\lll u_j, u_j \rrr_{\text{QF}(H+)} + \lll u_j,u_j \rrr_{\text{QF}(H-)} = \O(h^\infty),
\]
in accordance with Lemma \ref{qf-lemma}, since the union of the incoming and
outgoing regions associated to $H$ are the boundary of a submanifold
of $\{p = 0 \}$.  On the other hand, since $hD_\nu \mathbbm{1}_{\pm h
  D_\nu >0}$ are distinct operators, and if $a(x', hD')$ is a
tangential operator to $H$,  
\begin{align} \label{key point}
& \lll hD_\nu a^w u_j, u_j \rrr_{\text{QF}(H+)} + \lll h D_\nu a^w u_j,u_j
\rrr_{\text{QF}(H-)} \\ \nonumber
& = \lll \mathbbm{1}_{ h
  D_\nu >0} hD_\nu a^w u_j, u_j \rrr_{\text{QF}(H)} + \lll \mathbbm{1}_{h
  D_\nu <0}h D_\nu a^w u_j,u_j
\rrr_{\text{QF}(H)} \\ \nonumber
& = \int_{X_H} \frac{i}{h}[P, \chi(\beta/\epsilon)\psi(P)] h
D_\nu a^w u_j \overline{u_j} dx + \O( \epsilon + \delta),
\end{align}
which is precisely the quantity computed in the proof of \cite[Theorem
1]{CTZ}.

Hence, Theorem \ref{theorem-1} applies in this case, and indeed, one
readily computes that $(H_p \beta)^w = \pm \sqrt{1 + h^2 \Delta_H}$
depending on the outgoing/incoming direction.  But then $\xi_\nu H_p
\beta = 1 + h^2 \Delta_H$ as in the statement of \cite[Theorem 1]{CTZ}.
 In other words, the explanation for the extra factor of
$2$ in the measure computed in \cite[Theorem 1]{CTZ} is because the
incoming and outgoing hemispheres in $S_H^* X$ restricted to $B^*
H$ give {\it two} copies of the same measure.



{\section{2 microlocal quantum flux: Proof of Theorem \ref{2microlocalQER} }
\label{s:FBIDescription}

\subsubsection{ Grauert tubes and analytic $h$-pseudodifferential calculus}  Let $X$ be a compact, closed, real-analytic manifold of dimension $n$  and $X^{\C}$  denote a Grauert tube complex thickening of $X$ which is a  totally real submanifold. By Bruhat-Whitney,  there exists  a {\em maximal} Grauert tube radius $\tau_{\max} >0$ \cite{Z} such that for any $ \tau \leq \tau_{\max}$, the complex manifold $X^{\C}$ can be identified with $B^*_{\tau} := \{ (x, \xi) \in T^*X; \sqrt{\rho}(x,\xi) \leq \tau \}$ where $\sqrt{2\rho} = |\xi|_g$ is the exhaustion function using the complex geodesic exponential map $ \kappa : B^*_{\tau} \rightarrow \tilde{X}$ with $\kappa(x,\xi) = \exp_{x}( i \xi).$  From now on, we fix $\tau \in (0, \tau_{\max}).$
Under this indentification, we will let $z$ denote local complex coordinates in $B^*_{\tau}$ and we recall that $B_{\tau}^*$ is also naturally a Kahler manifold with potential function $\rho$ with associated symplectic form
$ \partial \overline{\partial} \rho = \omega.$  The complex Kahler, symplectic and Riemannian structures are all linked via the isomorphism $\kappa: B_{\tau}^*X \to X^{\mathbb C}.$ Denoting the almost complex structure by $J: T^{\mathbb C} X^{\mathbb{C}} \to T^{\mathbb C} X^{\mathbb{C}},$
\begin{eqnarray} \label{grauertformula}
\omega = \partial \overline{\partial} \rho, \quad \omega = d \alpha, \,\, \alpha = \Im \overline{\partial} \rho,
\end{eqnarray}
  where the strictly plurisubharmonic function $\rho$ solves the homogeneous Monge-Ampere equation \cite{GSt} 
  
  $$ \big(  \overline{\partial} \partial \sqrt{\rho} \big)^n (z) = 0, \quad z \in X^{\mathbb C}.$$
  
  The $\kappa$-corresponding objects  on $B_{\tau}^*X$ are given by
  \begin{eqnarray} \label{grauertformula2}
  \kappa^*\omega  = \sum_{j=1}^n dx_j \wedge d\xi_j = \frac{1}{i} \kappa^* \partial \overline{\partial} \rho, \,\,\, \, \kappa^* \omega = d \alpha, \,\, \alpha = \sum_i \xi_i dx_i,\nonumber \\
 \kappa^*\rho(x,\xi) = \frac{1}{2} |\xi|_x^2 = \frac{1}{2} g^{ij}(x) \xi_i \xi_j. \hspace{1in}
 \end{eqnarray}
 
 In the following, we will freely identify $B_{\tau}^*X$ and $X^{\mathbb C}$ and drop reference to the isomorphism  $\kappa$ when the context is clear.
 
  The Riemannian Kahler metric $\tilde{g}$ on $B_\tau^* X$ associated with $\omega$ is given by
  $$ \tilde{g}(u,v) = \omega (u, Jv)$$
  where $J$ is the almost complex structure on $B_\tau^*X$ induced by $\kappa.$

 Fix $p_0 \in X$ and let $x: U \to {\mathbb R}^n$ be geodesic normal coordinates centered at $p_0$ with $x(p_0) = 0.$ Then since $J_{(p_0,\xi)} (\partial_{x_j}) = \partial_{\xi_j}$ and $J_{(p_0,\xi)}(\partial_{\xi_j}) = - \partial_{x_j}$ and the base metric $g^{ij}(x) = \delta_j^i + O(|x|^2)$ (in particular, $\partial_{x_j} g^{kl}(0) = 0$),  it follows that 
 $ \tilde{g}_{(p_0,\xi)} = |dx|_{(p_0,\xi)}^2 + |d\xi|_{(p_0,\xi)}^2.$   As a result,
 $$ \nabla_{\tilde{g}} \rho (p_0,\xi) =   \sum_{j=1}^{n} \partial_{x_j}  \big( \frac{1}{2} g^{kl}(x) \xi_k \xi_l \big) |_{x=0}  \partial_{x_j} + \partial_{\xi_j} \big( \frac{1}{2} g^{kl}(x) \xi_k \xi_l \big)|_{x=0} \partial_{\xi_j}   = \sum_{j} \xi_j \partial_{\xi_j}.$$
 
 Given that  $\omega$ in (\ref{grauertformula}) is non-degenerate with $\omega = d\alpha$ there is a unique invariant  vector field $X$ solving  $\iota_X \omega = \alpha.$ Moreover (see \cite{GSt} section 5), $X$ satisfies 
 $X \rho = 2 \rho$ and $\kappa^* X = \xi \cdot \partial_{\xi}$. Since $\xi \cdot \partial_{\xi}$ and $\nabla_{\tilde{g}} \kappa^*\rho$ are consequently both invariant vector fields on $B_\tau^*X$ which agree at $(p_0,\xi)$ in geodesic normal coordinates, they must agree in all local coordinates $x$ near $p_0$. Since $p_0 \in X$ is arbitrary, by making the usual identification of $B_\tau^*X$ with $X^{\C}$, it follows that
 
    \begin{equation} \label{grad}
\nabla_{\tilde{g}} \rho (x,\xi)  = \sum_{j=1}^n \xi_j \partial_{\xi_j}, \quad (x,\xi) \in B_{\tau}^*X.
\end{equation}

From (\ref{grad}) and the argument above, it also follows that
 \begin{equation} \label{gradnorm}
\| \nabla_{\tilde{g}} \rho \|_{\tilde{g}}^2 = 2 \rho.
\end{equation}

The associated Kahler Laplacian is 
$$ \Delta_{\overline{\partial}} = \overline{\partial}^* \overline{\partial} = 2 \Delta_{\tilde{g}}$$
where the latter denotes the Riemannian Laplacian with respect to $\tilde{g}$ on $B_\tau^*X.$
In the following, to simplify notation, we will write $\nabla := \nabla_{\tilde{g}}$ and $\Delta := \Delta_{\overline{\partial}}.$

Let $ -h^2 \Delta_{\overline{\partial} }: C^{\infty}_0(B^*_{\tau}) \to C^{\infty}_0(B^*_{\tau})$ denote the semiclassical Kahler Laplacian with $-h^2 \Delta_{\overline{\partial}} = - 2 h^2 \Delta_{\tilde{g}}.$  By possibly rescaling the semiclassical parameter $h$ we assume without loss of generality that the characteristic manifold
$ p^{-1}(0) \subset B^*_{\tau}.$

\subsubsection{Fermi coordinates near a hypersurface $\Sigma \subset B_{\tau}^*X.$} \label{FERMI}

Given a smooth oriented hypersurface $\Sigma \subset B_{\tau}^*X$ we let $(\beta',\beta): U_{\Sigma} \to {\mathbb R}^{2n}$ be
normalized Fermi coordinates in a tubular neighbourhood $U_{\Sigma}$ of $\Sigma$ with $\Sigma = \{ \beta = 0 \}$ and
$\partial_\beta$ the unit exterior normal to $\Sigma.$ In terms of these coordinates the conjugated Laplacian $ |\tilde{g}|^{1/4} \Delta_{\tilde{g}} |\tilde{g}|^{-1/4}$ can be written in the form
\begin{equation} \label{fermi expansion}
|\tilde{g}|^{1/4} (-h^2 \Delta_{\tilde{g}} ) \, |\tilde{g}|^{-1/4}= (h D_{\beta})^2 + R(\beta,\beta'; hD_{\beta'}),
\end{equation}

where $R(\beta,\beta',hD_{\beta'})$ is a second-order $h$-differential operator in the tangential $\beta'$-variables and 
$R(0,\beta', hD_{\beta'}) =  -h^2 \Delta_{\Sigma}$ where $\Delta_{\Sigma}$ is the Riemannian Laplacian on the hypersuface $\Sigma$  induced by the metric $\tilde{g}.$ In the following, we abuse notation and denote the conjugated Laplacian simply by $-h^2 \Delta_{\tilde{g}}$ and $| \tilde{g}|^{1/4} u_h$ by $u_h.$

\subsubsection{FBI transform} \label{SS:FBI}
 Let $U\subset T^*X$ be open. Following \cite{Sjo}, we say that $a \in S^{m,k}_{cla}(U)$  provided $a \sim h^{-m} (a_0 + h a_1 + \dots)$ in the sense that
\begin{equation*}
\partial_{x}^k \partial_{\xi}^l \overline{\partial}_{(x,\xi)} a = O_{k,l}(1) e^{- \langle \xi \rangle/Ch}, \quad (x,\xi)\in U, 
\end{equation*}
and for $(x , \xi) \in U$,
\begin{equation*}
  \Big| a - h^{-m} \sum_{0 \leq j \leq \langle \xi \rangle/C_0 h} h^{j} a_j \Big| = O(1) e^{- \langle \xi \rangle/C_1 h},\quad
 |a_j| \leq C_0 C^{j} \, j ! \, \langle \xi \rangle^{k-j}.
 \end{equation*}
We sometimes write $S^{m,k}_{cla}=S^{m,k}_{cla}(T^*X)$. The symbol $a \in S^{m,k}_{cla}$ is $h$-elliptic provided
$ |a(x,\xi)| \geq C h^{-m} \langle \langle \xi \rangle^k$ for all $(x,\xi) \in T^*X.$

 As in \cite{Sjo},  given an $h$-elliptic, semiclassical analytic symbol $a \in S^{3n/4,n/4}_{cla}(X \times (0,h_0]),$  we consider an intrinsic FBI transform $T(h):C^{\infty}(X) \to C^{\infty}(T^*X)$ of the form
\begin{equation} \label{FBI}
T u(x,\xi;h) = \int_{X} e^{i\phi(x,\xi,y)/h}  a(x,\xi,y,h) \tilde{\chi}( x, y) u(y) \, dy \end{equation}
In (\ref{FBI}), the cutoff $\tilde{\chi} \in C^{\infty}_{0}(X \times X)$ is supported in a  small fixed neighbourhood of $\text{diag}(X) = \{ (x,x,) \in X \times X \}.$ The phase function is required to satisfy $\phi(x,\xi,x) = 0, \, \partial_y \phi(x,\xi,x) = - \xi$ and
$$ \Im (\partial_y^2 \phi)(x,\xi, x) \sim | \langle \xi \rangle | \, Id.$$

In particular, it follows that the phase $\phi$ satisfies
\begin{eqnarray} \label{phase}
\Re \phi(x,\xi,y) = \langle x-y, \xi \rangle + O(|x-y|^2 \langle \xi \rangle), \nonumber \\
\Im \phi(x,\xi,y)  = \frac{1}{2} |x-y|^2 \big( 1 + O(|x-y|) \big)  \langle \xi \rangle.
\end{eqnarray}


Given $T(h) :C^{\infty}(X) \to C^{\infty}(T^*X)$ it follows by an analytic stationary phase argument \cite{Sjo} that one can construct an operator $S(h): C^{\infty}(T^*X) \to C^{\infty}(X)$ of the form
\begin{equation} \label{left}
 S v(x;h) = \int_{T^*X} e^{-i  \, \overline{\phi(x,y,\xi)}  /h} d(x,y,\xi,h) v(y,\xi) \, dy d\xi \end{equation}
with $d \in S^{3n/4,n/4}_{cla}$ such that $S(h)$  is a left-parametrix for $T(h)$ in the sense that

\begin{equation} \label{leftparametrix}
S(h) T(h) = Id + R(h),\qquad\partial_{x}^{\alpha} \partial_{y}^{\beta} R(x,y,h) = O_{\alpha, \beta}(e^{-C/h}).
\end{equation}

We also note that with the normalizations in (\ref{FBI}) an application of analytic stationary phase as in (\ref{left}) shows that there exists $e_0 \in S^{0}_{cla}$  h-elliptic such that 
$$ T^*(h) e_0 T(h) = S(h)  T(h) +O(h)_{L^2 \to L^2},$$
and consequently, it follows that
$$ \| T u_h \|_{L^2}  \approx \| u_h \|_{L^2} \approx 1.$$
The standard left-quantized analytic $h$-pseudodifferential operator (h-psdo) $Op_h(a)$ with kernel 
$K_a(x,y) = (2\pi h)^{-n} \int e^{i \langle x-y,\xi \rangle/h} a(x,\xi,h) d\xi$ satisfies
\begin{equation} \label{antiwick}
Op_h(a) = S(h) a T(h) + O(h)_{L^2 \to L^2} = T^*(h) e_0\, a T(h)  + O(h)_{L^2 \to L^2} \end{equation}

For future reference, we recall here that in view of (\ref{phase}), by diferentiation under the integral in (\ref{FBI}) it follows that
\begin{eqnarray} \label{intertwine1}
hD_{x_j} T u(x,\xi) = \int_{X} e^{i\phi(x,\xi,y)/h} \big( \xi_j + O(|x-y|) \big) a(x,\xi,y) \tilde{\chi}(x,y) u(y) dy \nonumber \\
  = \xi_j \,Tu(x,\xi) + O_{L^2}(\sqrt{h})
\end{eqnarray}
since $|x-y| e^{i\phi(x,\xi,y)/h} = O(|x-y| e^{-\frac{C}{h} |x-y|^2 }) = O(\sqrt{h}).$
Similarily,
\begin{eqnarray} \label{intertwine2}
h D_{\xi_j} Tu(x,\xi) = \int_{X} e^{i\phi(x,\xi,y)/h} \big(x_j-y_j + O(|x-y|^2)\big) a(x,\xi,y) \tilde{\chi}(x,y) u(y) dy \nonumber \\
= O_{L^2}(\sqrt{h}).\end{eqnarray}

A similar argument  (see \cite{Sjo} and the appendix) shows that

\begin{eqnarray} \label{intertwine3}
\langle hD_{x_j} Tu_h, Tu_h \rangle_{L^2}  = \langle \xi_j Tu_h, Tu_h \rangle_{L^2} + O(h),\nonumber \\ \nonumber \\
\langle hD_{\xi_j} Tu_h, Tu_h \rangle_{L^2} = O(h).
\end{eqnarray}

It is convenient to choose here a particular FBI transform, $T_{hol}(h): C^{\infty}(X) \to C^{\infty}(B_{\tau}^*X)$ that is compatible with the complex structure in the Grauert tube $B_{\tau}^*X.$ This transform is readily described  in terms of the holomorphic continuation of the heat operator $e^{t \Delta_g}$ at time $t = h/2.$

We briefly recall here some background on the operator $T_{hol}(h): C^{\infty}(X) \to C^{\infty}(X_{\tau}^{\C})$ and refer the reader to \cite{GLS} and \cite{GT} for further details.

\subsubsection{Complexified heat operator on closed, compact manifolds} \label{heat}
Consider the heat operator of $(X,g)$ defined at time $h/2$ by $$E_{h}=e^{\frac{h}{2}\Delta_g}:C^{\infty}(X) \to C^{\infty}(X).$$

 By a result of Zelditch \cite[Section 11.1]{Z}, the maximal geometric tube radius $\tau_{\max}$ agrees with the maximal analytic tube radius in the sense that for all $ 0<\tau < \tau_{\max}$, all the eigenfunctions $\varphi_j$  extend holomorphically to $X_\tau^\C$ (see also \cite[Prop. 2.1]{GLS}). In particular, the kernel $E(\cdot,\cdot;h)$ admits a holomorphic extension to $B^*_{\tau} X \times B^*_{\tau}X$
for all $0<\tau < \tau_{\max}$  and $h \in (0,1)$, \cite[Prop. 2.4]{GLS}. We denote the complexification by $E_h^\C( \cdot, \cdot)$.
To recall asymptotics for $E^{\C}_h$ we note that
  the squared geodesic distance on $X$
$$r^2(\cdot, \cdot): X \times X \to {\mathbb R}$$
holomorphically continues in both variables to $X_{\tau}^{\C} \times X_{\tau}^{\C}$ in a straightforward fashion. 
More precisely, $0<\tau<\tau_{\max}$, there exists a connected open neighbourhood $\tilde \Delta \subset X_\tau^\C \times M_\tau^\C$ of the diagonal $\Delta \subset X\times X$ to which $r^2(\cdot, \cdot)$ can be holomorphically extended \cite[Corollary 1.24]{GLS}. We denote the extension
by $r_\C^2(\cdot,\cdot) \in \mathcal O(\tilde \Delta).$ Moreover, one can easily recover the exhaustion function $\sqrt{\rho_g}(z)$ from $r_{\C}$; indeed, 
$\rho_g(z)=-r^2_\C(z, \bar{z})$
for all $z \in B^*_{\tau}X$.

The basic asymptotic behaviour of $E_h^{\C}(z,y)$ with $(z,y) \in B^*_{\tau}X \times X$ is studied in \cite{GLS}. In particular,

\begin{equation}\label{hol heat}
E_h^\C(z,y)=e^{-\frac{r^2_\C(z,y)}{2h}} b^\C(z,y; h) + O (e^{-\beta/h}), \quad (z,y) \in B_{\tau}^*X \times X.
\end{equation}
Here, $ \beta>0$ is a constant depending on $(X,g,\tau)$ and
\begin{equation} \label{heat symbol}
b^\C \sim \sum_{k=0}^{\infty} b_{k}^{\C}  h^{k - \frac{n}{2} } \in S^{n/2,0}_{cla}; \,\,\, b_k^{\C} \in S^{0,0)}_{cla}, \,\, k=0,1,2,...,
\end{equation}
  where the $b_k^\C$'s denote the analytic continuation of the coefficients appearing in the formal solution
of the heat equation on $(X,g).$ In the following, to simplfiy notation,  we will simply write $b_k = b_k^{\C}; \, k=0,1,2,...$  for the symbols in the expansion (\ref{heat symbol}).
 \medskip

The Kahler potential
\begin{equation} \label{kahler potential}
2\rho(z) =  \Re r^2_{\C}(z, \Re z) = \frac{1}{4} r_{\C}^2(z, \bar{z}) =  |\xi|_{x}^2 \end{equation}
where, $z = \exp_{x} ( -i \xi).$

Using (\ref{kahler potential}) and the expansion in (\ref{hol heat}) it is proved in \cite [Theorem 0.1]{GLS}  that the operator $T_{hol}(h): C^{\infty}(
X) \to C^{\infty}(X_{\tau}^{\C})$ given by
\begin{equation} \label{holFBI}
T_{hol} \phi_h (z) = h^{-n/4} \int_{X} e^{ [ - r_{\C}^2(z,y)/2 - \rho(z)]/h} b^{\C}(z,y,h) \chi(x,y) \phi_h(y) dy, \quad z \in B_{\tau}^* \end{equation}
is also an FBI transform in the sense of \eqref{FBI} with $h$-elliptic amplitude  $b \in S^{n/2,0}_{cla}$ and phase function
$\phi(z,y) = i  \Big( \, \frac{ r_{\C}^2(z,y)}{2} + \rho(z) \, \Big). $
In \eqref{holFBI} the multiplicative factor $h^{-n/4}$ is added to ensure $L^2$-normalization so that  $\| T_{hol} \phi_h \|_{L^2(X_{\tau}^{\C})} \approx  1.$

Since $u_h$ are eigenfunctions of the Riemannian Laplacian on $(X,g)$ with eigenvalue $h^{-2}$ it follows by analyic continuation that 

$$ e^{-1/2h} u^{\C}(z) = E^{\C}(h) u_h (z); \quad z \in B_{\tau}^*M.$$

Consequently, in view of (\ref{holFBI}),

\begin{equation} \label{comp}
T_{hol}(h) u_h (z) = e^{-\rho(z)/h} E^{\C}(h) u_h (z) = e^{-1/2h} e^{-\rho(z)/h} u_h^{\C}(z). \end{equation} \

Using (\ref{hol heat}), it follows that the left parametrix $S_{hol}(h)$ in (\ref{left}) satisfies
$$  T^*_{hol}(h) |b_0|^{-2} T_{hol}(h) = S_{hol}(h) T_{hol}(h)  + O(h)_{L^2 \to L^2},$$
where $b_0 \in S^{0}_{cla}$ is $h$-elliptic principal symbol in (\ref{hol heat}).
Since $S(h) a T(h) = Op_{h}(a) + O(h^{\infty}),$ the semiclassical anti-Wick quantization, it follows by an application of standard 
$h$-pseudodifferential calculus that for any $a \in C^{\infty}_{0}(B^*_{\tau}),$
\begin{equation} \label{key psdo}
\langle a T_{hol} u_h, T_{hol} u_h \rangle_{L^2(B_{\tau}^*X)} = \langle Op_h(|b_0|^2 a) u_h, u_h \rangle_{L^2(X)} + O(h). \end{equation}  }

To simplify the writing somewhat, from now on we will fix the FBI transform and  set $T = T_{hol}.$

\subsubsection{ 2 microlocal quantum flux (2QF)}

The definition of the  second microlocal quantum flux (2QF) is in direct analogy with the usual quantum flux (QF) in section 2. Let $ \Sigma \subset B_{\tau}^*X$ be a hypersurface with normalized defining function $\beta$ and let $\chi \in C^{\infty}_0({\mathbb R},[0,1])$ be a cutoff as in Section 2. In the following, we continue to make the natural Bruhat-Whitney identification $B_{\tau}^*X \cong X^{\C}.$  As before, we let $u_h$ be  QE sequence of Laplace eigenfunctions, $a \in C^{\infty}_{0}(B^*_{\tau}X)$ and $\beta: B_{\tau}^*X \to {\mathbb R}$ be the normalized defining function for $\Sigma \subset B_\tau^*X$ with $\Sigma = \{ \beta = 0 \},$ $ \, d\beta |_{\Sigma} \neq 0$ and $\partial_{\beta}$ the unit exterior normal to $\Sigma.$ We define a modified {\em 2-microlocal quantum flux (2QF)} adapted to the conjugated operator

\begin{equation} \label{new p op}
P_{\rho}(h) = e^{-\rho/h} \cdot (-h^2 \Delta_{\bar{\partial}}) \cdot e^{\rho/h}, \end{equation}
where $\rho = \frac{1}{2} |\xi|_x^2$ is the Kahler potential  on the Grauert tube.

\begin{align} \label{2QF}
\langle & a hD_{\beta} \cdot T u_h, T u_h \rangle_{2QF(\Sigma^{\pm})}\nonumber \\
& := \lim_{\epsilon \to 0} \Big(  \frac{i}{h} \lll \, [ P_{\rho}(h),  a \, \chi (\beta/\epsilon) hD_{\beta} ]_{+} {\mathbbm{1}}_{\pm hD_{\beta} \geq 0} \, T u_h, T u_h \rrr_{B^*_{\tau}X)} \nonumber \\
 & \hspace{1cm} -  \frac{2 i}{h} \big\langle ( h \nabla \rho ) a  \chi(\beta/\epsilon)  hD_{\beta} \, {\mathbbm{1}}_{\pm hD_{\beta} \geq 0}   Tu_h, \, T u_h \big\rangle_{B_{\tau}^*X} \nonumber \\
 & \hspace{1cm} + \frac{2i}{h} \big\langle a \chi(\beta/\epsilon) hD_{\beta} \, {\mathbbm{1}}_{\pm hD_{\beta} \geq 0} Tu_h, \, (h \nabla \rho ) \, T u_h \big\rangle_{B_{\tau}^*X}   \Big).
\end{align}\\

We note that since $e^{-1/2h} e^{-\rho/h} u_h^{\C} = T(h) u_h,$ 
$$ P_{\rho}(h) T(h) u_h = e^{-1/2h} e^{-\rho/h} (-h^2 \Delta_{\bar{\partial}}) u_h^{\C} = 0.$$

\subsection{Proof of Theorem \ref{2microlocalQER}} \begin{proof}   We  prove Theorem \ref{2microlocalQER} following the reasoning in Theorem \ref{theorem-1}. The result in Proposition \ref{2microlocal} will follow as a consequence. Let $\nu$ be a unit exterior normal of $\Sigma$ and suppose there is a domain $\Omega \subset B_{\tau}^* X$ with $\Sigma = \partial \Omega.$ In the following, we let $(\beta',\beta)$ be Fermi coordinates adapted to the hypersurface $\Sigma = \{ \beta = 0 \}$ with exterior unit normal $\partial_{\nu} = \partial_{\beta}.$ We note here that the $\beta$-coordinates are a priori, smooth functions in the canonical $(x,\xi)$-coordinates. 

By the same reasoning as in (\ref{key point}),

\begin{align} \label{2micro1}
\langle h D_{\nu} a T u_h, & T u_h \rangle_{2QF(\Sigma^{+})}  + \langle h D_{\nu} a T u_h, T u_h \rangle_{2QF(\Sigma^{-})}  \nonumber \\ \nonumber 
& = \lim_{\epsilon \to 0} \Big(  \frac{i}{h}   \big\langle [P_{\rho}(h), \chi(\beta/\epsilon)\psi(P_{\rho})] h
D_\nu a  T u_h, \, T u_h \big\rangle_{L^2(\Omega)}  \nonumber \\
& \quad -  \frac{2 i}{h} \big\langle ( h \nabla \rho ) a  \chi(\beta/\epsilon)  hD_{\beta} \, {\mathbbm{1}}_{\pm hD_{\beta} \geq 0}   Tu_h, \, T u_h \big\rangle_{B_{\tau}^*X} \nonumber \\
 & \quad + \frac{2i}{h} \big\langle a \chi(\beta/\epsilon) hD_{\beta} \, {\mathbbm{1}}_{\pm hD_{\beta} \geq 0} Tu_h, \, (h \nabla \rho ) \, T u_h \big\rangle_{B_{\tau}^*X}   \Big). \hspace{1in}
\end{align}


Setting
$$A_{\epsilon}(h):= \chi(\beta/\epsilon)\psi(P_{\rho}) h
D_\beta \, a ,$$
where $a = a(\beta') \in S^0,$
it follows by an application of QE and taking $\epsilon \to 0$ limit that 
\begin{eqnarray} \label{rel1} \lim_{\epsilon \to 0} \frac{i}{h}   \big\langle [P_{\rho}(h), \chi(\beta/\epsilon)\psi(P_{\rho})] h
D_\nu a  T_{hol}u_h, \, T_{hol} u_h \big\rangle_{L^2(\Omega)}  \nonumber \\
= \lim_{\epsilon \to 0} \frac{i}{h}    \big\langle [P_{\rho}(h), A_{\epsilon}(h)] \,  Tu_h, \, Tu_h \big\rangle_{L^2(\Omega)} + o(1) \end{eqnarray}
as $h \to 0^+.$
We repeat the basic trick in the Rellich lemma using that $P_{\rho}(h) Tu_h = P_{\rho}(h) e^{-\rho/h} u_h^{\mathbb C} = 0$ to write

\begin{eqnarray} \label{rel2}
\frac{i}{h} \big\langle [P_{\rho}(h), A_{\epsilon}(h)] \,  Tu_h, \, T u_h \big\rangle_{L^2(\Omega)}  
= \frac{i}{h} \big\langle P_{\rho}(h) A_{\epsilon}(h) \,  Tu_h, \, Tu_h \big\rangle_{L^2(\Omega)} \nonumber \\
 -  \frac{i}{h} \big\langle A_{\epsilon}(h) \,  Tu_h, \, P_{\rho}(h) T u_h \big\rangle_{L^2(\Omega)}. \end{eqnarray}
 
 Unlike the standard Rellich case, one cannot apply Green's formula directly on the RHS of (\ref{rel2}) since $P_{\rho} = e^{-\rho/h} (-h^2\Delta_{\bar{\partial}}) e^{\rho/h}$ is not the Laplacian. To adapt the argument,  we write
 
 \begin{equation} \label{comm}
 P_\rho(h) = [e^{-\rho/h}, -h^2 \Delta_{\bar{\partial}}] e^{\rho/h}  - h^{2} \Delta_{\bar{\partial}} = -h^2 \Delta_{\bar{\partial}}  - 2 h \nabla \rho  - h \Delta_{\bar{\partial}}\rho \end{equation}
 Since by $L^2$-boundedness, $ \|  \frac{i}{h} [-h \Delta_{\bar{\partial}} \, \rho, A_{\epsilon} ]  \|_{L^2(\Omega)} = O(h),$ It follows that modulo $O(h)$-error the RHS in (\ref{rel2}) equals
 
 \begin{align} \label{rel3}
 \frac{i}{h} \big\langle -h^2 \Delta_{\bar{\partial}} A_{\epsilon}(h) \,  Tu_h, \, Tu_h \big\rangle_{L^2(\Omega)} 
 -  \frac{i}{h} \big\langle A_{\epsilon}(h) \,  Tu_h, \, -h^2 \Delta_{\bar{\partial}}Tu_h \big\rangle_{L^2(\Omega)}  \\ \nonumber \\ \nonumber 
- \frac{2 i}{h} \big\langle ( h \nabla \rho ) \,  A_{\epsilon}(h) \,  Tu_h, \, Tu_h \big\rangle_{L^2(\Omega)}
 + \frac{2i}{h} \big\langle A_{\epsilon}(h) \,  Tu_h, \, (h \nabla \rho ) \,  Tu_h \big\rangle_{L^2(\Omega)}.
 \end{align} \
   
 To estimate the gradient terms appearing in the last line of (\ref{rel3}) we note that since they involve a difference of the form $ \frac{i}{h} ( \langle X Tu, Tu \rangle - \langle Tu, X Tu \rangle)$ where $X$ is an $h$-vector field rather than the Laplacian, these terms do not contribute boundary terms; rather they can added to the LHS in  (\ref{rel2}). So, after an application of Green's formula to rewrite the terms in the first line of (\ref{rel3}) as a boundary integral over $\Sigma = \partial \Omega,$ it follows that
 
 \begin{eqnarray} \label{rel3.5}
\frac{i}{h} \big\langle [P_{\rho}(h), A_{\epsilon}(h)] \,  Tu_h, \, T u_h \big\rangle_{\Omega} \hspace{1in} \nonumber \\
+ \frac{2 i}{h} \big\langle ( h \nabla \rho ) A_{\epsilon}(h) \,  Tu_h, \, T u_h \big\rangle_{\Omega} 
 - \frac{2i}{h} \big\langle A_{\epsilon}(h) \,  Tu_h, \, (h \nabla \rho ) \, T u_h \big\rangle_{\Omega}  \nonumber \\
 = i \Big( - \big\langle h \partial_{\beta} A_{\epsilon}(h) Tu_h, Tu_h \big\rangle_{\Sigma} + \big\langle A_{\epsilon}(h) Tu_h, h \partial_{\beta} Tu_h \big\rangle_{\Sigma} \Big) + O(h).
 \end{eqnarray} \

 \subsubsection{Computation of the LHS in (\ref{rel3.5})} First, to compute the two terms in the penultimate line of (\ref{rel3.5}), we will need the follow lemma.  So as  not to  break the exposition here, we defer the proof of the following lemma  along with the computation of $q_1$ to the appendix.

 \begin{lemma} \label{sub}
 There exists $q_1 \in  C^{\infty}_0(\Omega) \subset S^{0}(\Omega)$ such that 
 \begin{eqnarray} \label{relgrad}
   \frac{2i}{h} \Big(  \big\langle  h \nabla \rho  \,  A_{\epsilon}(h) \,  Tu_h, \, T u_h \big\rangle_{L^2}
 - \big\langle A_{\epsilon}(h) \,  Tu_h, \,  h \nabla \rho \, T u_h \big\rangle_{L^2} \Big) \nonumber \\
 = \langle Op_h(q_1) u_h, u_h \rangle_{L^2} + O(h).
 \end{eqnarray}

 \end{lemma}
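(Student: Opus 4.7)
My plan is to exploit the adjoint structure of $h\nabla\rho$ as a real first-order $h$-differential operator on $(B_\tau^*X,\tilde g)$ in order to rewrite the combination in \eqref{relgrad}, and then convert the resulting quadratic forms in $Tu_h$ into quadratic forms in $u_h$ via the FBI correspondence \eqref{key psdo}. From the standard identity $X^* = -X - \mathrm{div}\, X$ applied to the vector field $\nabla\rho$ one has $(h\nabla\rho)^* = -h\nabla\rho - h\Delta\rho$, so moving $h\nabla\rho$ to the first slot in the second inner product of \eqref{relgrad} gives
\begin{equation*}
\langle h\nabla\rho\, A_\epsilon Tu_h, Tu_h\rangle - \langle A_\epsilon Tu_h, h\nabla\rho\, Tu_h\rangle = -2\langle A_\epsilon Tu_h, h\nabla\rho\, Tu_h\rangle - \langle A_\epsilon Tu_h, h\Delta\rho\, Tu_h\rangle.
\end{equation*}

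After multiplication by $2i/h$, the $h\Delta\rho$ contribution becomes multiplication by a smooth $C_0^\infty$ symbol and passes through \eqref{key psdo} directly, contributing $\langle Op_h(-2i|b_0|^2\Delta\rho\,\sigma(A_\epsilon))\,u_h, u_h\rangle + O(h)$ to $q_1$. The remaining term $-\tfrac{4i}{h}\langle A_\epsilon Tu_h, h\nabla\rho\, Tu_h\rangle$ is the main obstacle. Using \eqref{grad} to write $h\nabla\rho = i\sum_j \xi_j\, hD_{\xi_j}$, commuting $A_\epsilon$ past the multipliers $\xi_j$ at a cost of $O(h)$ commutators (themselves absorbed by \eqref{key psdo}), and moving $\xi_j A_\epsilon^*$ into the second slot, this piece reduces to a linear combination of expressions of the form $\tfrac{1}{h}\langle hD_{\xi_j}\, Tu_h, b_j\, Tu_h\rangle$ with $b_j \in C_0^\infty$ explicit.

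The key input — and the main technical obstacle — is a quadratic-form refinement of \eqref{intertwine3}: for any $b \in C_0^\infty(B_\tau^*X)$,
\begin{equation*}
\tfrac{1}{h}\langle hD_{\xi_j}\, Tu_h,\, b\, Tu_h\rangle = \langle Op_h(r_j(b))\, u_h, u_h\rangle + O(h),
\end{equation*}
where $r_j(b)\in S^0$ is obtained by carrying the analytic stationary-phase expansion for the kernel of $T_{hol}^*(b\, hD_{\xi_j}) T_{hol}$ one order past the leading-order vanishing that produced \eqref{intertwine3}. The explicit formula for $r_j(b)$ involves $b$ and its first derivatives, the subprincipal amplitude coefficient $b_1$ from \eqref{heat symbol}, and the critical-point data of the phase $\phi(z,y)=i(r_\C^2(z,y)/2+\rho(z))$ at the real stationary point $y=\Re z$. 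Assembling both contributions produces $q_1 \in C_0^\infty(\Omega)\subset S^0$, whose compact support is inherited from the cutoffs $\chi(\beta/\epsilon)$ and $a\in C_0^\infty$ inside $A_\epsilon$. The bookkeeping needed to identify $r_j(b)$ and hence the explicit form of $q_1$ is precisely what is carried out in the appendix, yielding the expression referenced as \eqref{q1term}.
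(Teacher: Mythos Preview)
Your opening adjoint move is a legitimate simplification that the paper does not make: the paper treats the two terms $I_1(h)=\tfrac{2i}{h}\langle h\nabla\rho\,A_\epsilon Tu_h,Tu_h\rangle$ and $I_2(h)=-\tfrac{2i}{h}\langle A_\epsilon Tu_h,h\nabla\rho\,Tu_h\rangle$ separately, writing each out as an explicit oscillatory integral via \eqref{FBI}, applying analytic stationary phase in the $x$-variable, and only at the end observing $I_2=I_1+O(\epsilon)+o_\epsilon(1)$. Your identity $(h\nabla\rho)^*=-h\nabla\rho-h\,\mathrm{div}_{\tilde g}\nabla\rho$ collapses that redundancy immediately.

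The gap is in the next step. You assert that $-\tfrac{4i}{h}\langle A_\epsilon Tu_h,h\nabla\rho\,Tu_h\rangle$ reduces, after shuffling $\xi_j$ and $A_\epsilon^*$, to a combination of forms $\tfrac{1}{h}\langle hD_{\xi_j}Tu_h,\,b_j\,Tu_h\rangle$ with $b_j\in C_0^\infty$ a \emph{multiplier}. But $A_\epsilon=\chi(\beta/\epsilon)\psi(P_\rho)\,hD_\beta\,a$ already carries a derivative $hD_\beta$, so $\xi_jA_\epsilon Tu_h$ is not of the form $(\text{multiplier})\cdot Tu_h$; the quadratic form genuinely involves \emph{two} derivatives on the pair $(Tu_h,Tu_h)$, not one. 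A single-derivative refinement of \eqref{intertwine3} is therefore insufficient: you would need a bilinear subprincipal estimate for $\tfrac{1}{h}\langle hD_{z_k}Tu_h,\,b\,hD_{\xi_j}Tu_h\rangle$. (The same issue bites your ``$O(h)$ commutators absorbed by \eqref{key psdo}'': after division by $h$ these commutator terms are $O(1)$ quantities still paired against $hD_{\xi_j}Tu_h$, so \eqref{key psdo} does not apply to them directly either.) The paper sidesteps this by never attempting the reduction: it keeps both derivatives in the amplitude from the outset and runs stationary phase in $x$ on the full kernel, with the surviving contribution coming precisely from the second-order term $h\Delta_x(\cdots)$ in \eqref{I1.1}, yielding \eqref{I1.3} and hence \eqref{q1term}. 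Your sketch ultimately defers to that computation, but the computation is not of the shape $r_j(b)$ you describe.
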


 As for the first commutator term in (\ref{rel3.5}), the principal symbol of the commutator in Fermi coordinates  $(\beta',\beta): U_{\Sigma} \to {\mathbb R}^{2n}$ in a tube centered on $\Sigma = \{ \beta = 0 \}$ (see subsection \ref{FERMI}). We denote the symplectic coordinates dual to $(\beta',\beta)$ by $(\xi_{\beta'}, \xi_{\beta})$ and split the gradient of $\rho$ into tangential and normal parts by writing $\nabla \rho = \rho_{\beta} \partial_{\beta} + \rho_{\beta'} \cdot \partial_{\beta'}.$  
 \begin{eqnarray} \label{comm1}
  \sigma \big( \frac{i}{h} [ P_{\rho}, A_{\epsilon} ] \big) &=& \Big\{ \, \xi_{\beta}^2 + R(\beta,\beta',\xi_{\beta'}) - 2 i  ( \rho_{\beta} \xi_{\beta} + \rho_{\beta'} \cdot \xi_{\beta'} ), \, \chi'(\beta/\epsilon) \, a \,\psi(p_{\rho}) \xi_{\beta} \Big\} \nonumber \\
  &=& 2 a  \xi_{\beta}^2  \epsilon^{-1}  \chi'(\beta/\epsilon)  -2 i  a \rho_{\beta}  \xi_\beta  \epsilon^{-1} \chi'(\beta/\xi) 
  + d (\beta, \beta', \xi_\beta, \xi_{\beta'})  \chi(\beta/\epsilon),
\end{eqnarray}
 where $d \in S^{0}(U_{\Sigma})$ From (\ref{comm1}). Then, by $L^2$-boundedness,  
 \begin{eqnarray} \label{comm2}
 \frac{i}{h} \big\langle [P_{\rho}(h), A_{\epsilon}(h)] \,  Tu_h, \, Tu_h \big\rangle_{\Omega} &=&   2 \big\langle \epsilon^{-1} \chi'(\beta/\epsilon) \, a \, (h D_{\beta})^2 \,  T u_h, T u_h \big\rangle_{\Omega}  \nonumber\\
 &-& 2 i \big\langle \epsilon^{-1} \chi'(\beta/\epsilon) \,a \, \rho_{\beta}\,  h D_{\beta} \,Tu_h, Tu_h \big\rangle_{\Omega} \nonumber \\ 
 &+& O(1) \| \chi(\beta/\epsilon) T u_h \|^2_{L^2}. \end{eqnarray}
 
 It follows by an  application of QE that
 $$ \| \chi(\beta/\epsilon) T u_h \|_{L^2}^2  = O(1) \int_{S^*X}  \chi^2(\beta/\epsilon) d\sigma + o_{\epsilon}(1) = O(\epsilon) + o_{\epsilon}(1)$$
 and so, from (\ref{comm2})
 
  \begin{eqnarray} \label{comm3}
 \frac{i}{h} \big\langle [P_{\rho}(h), A_{\epsilon}(h)] \,  Tu_h, \, Tu_h \big\rangle_{\Omega} = 2 \big\langle  \epsilon^{-1} \chi'(\beta/\epsilon) \,a (h D_{\beta})^2 T u_h, T u_h \big\rangle_{\Omega} \nonumber \\
   - 2 i \big\langle \epsilon^{-1} \chi'(\beta/\epsilon) \, a \, \rho_{\beta}  h D_{\beta} Tu_h, Tu_h \big\rangle_{\Omega} + O(\epsilon) + o_{\epsilon}(1).
 \end{eqnarray}


 By direct computation using the intertwining relations in (\ref{intertwine1}), (\ref{intertwine2}), and (\ref{intertwine3}) (see appendix),

 \begin{eqnarray} \label{comm4}
  2 \big\langle  \epsilon^{-1} \chi'(\beta/\epsilon) \,a (h D_{\beta})^2 T u_h, T u_h \big\rangle_{\Omega}  - 2 i \big\langle \epsilon^{-1} \chi'(\beta/\epsilon) \, a \, \rho_{\beta}  h D_{\beta} Tu_h, Tu_h \big\rangle_{\Omega}  \nonumber \\
  = \big\langle Op_h ( \eta_{\epsilon}) u_h, u_h \big\rangle_{L^2} + O(h), \nonumber \\ \nonumber \\
  \text{where} \quad \eta_{\epsilon}(x,\xi) = a \epsilon^{-1} \chi'(\beta/\epsilon)  |b_0|^2 \Big(  ( \xi \cdot \partial_{\beta}x   \big)^2  - \rho_\beta  ( \xi \cdot \partial_{\beta} x )\Big),
  \end{eqnarray}
  and in (\ref{comm4})  we use the shortform notation $ \xi \cdot  \partial_{\beta} x = \sum_{j=1}^{n}  \xi_j \cdot ( \partial_{\beta} x_j).$
  From (\ref{comm3}) and (\ref{comm4}) it follows by an application of QE that  
  
  \begin{eqnarray} \label{LHSupshot1}
 \frac{i}{h} \big\langle [P_{\rho}(h), A_{\epsilon}(h)] \,  Tu_h, \, Tu_h \big\rangle_{\Omega} \nonumber \\
 = \int_{S^*X} \epsilon^{-1} \chi'(\beta/\epsilon) \, q_2(x,\xi) \, d\sigma + O(\epsilon) + o_{\epsilon}(1), \nonumber \\
\end{eqnarray} 
where 
\begin{equation} \label{q2term}
 q_2(x,\xi) = a \,   |b_0(x,\xi,x)|^2  \Big( \big( \xi \cdot \partial_{\beta} x \big)^2 \, - \rho_\beta (\xi  \cdot \partial_{\beta} x ) \Big). \end{equation}\

 \subsubsection{Computation of the RHS in (\ref{rel3.5})}

 Finally, to compute the RHS in (\ref{rel3.5}) we use that in Fermi coordinates $(\beta',\beta)$ adapted to $\Sigma = \{ \beta = 0 \}$ the Kahler Laplacian is of the form
 $-h^2 \Delta_{\bar{\partial}} = -(h \partial_{\beta})^2 + R(\beta,\beta',hD_{\beta'}).$  Here $R(0,\beta',hD_{\beta'})= - h^2 \Delta_{\Sigma},$ the induced tangential Laplacian along $\Sigma.$

  Since the operator
 $$P_{\rho}(h) = - (h \partial_\beta)^2 + R(\beta,\beta', h D_{\beta'}) - 2 h \nabla \rho(\beta,\beta') - h (\Delta \rho) $$
 and $P_{\rho}(h) T u_h = 0,$ it follows that
 
 \begin{align} \label{1termRHS}
-i \big\langle h \partial_{\beta} & A_{\epsilon}(h) Tu_h,  Tu_h \big\rangle_{\Sigma} \nonumber \\
 & = -  \big\langle h \partial_{\beta} ( \chi(\beta/\epsilon) a  h \partial_{\beta}) Tu_h, Tu_h \big\rangle_{\Sigma} \nonumber \\
& = -  \big\langle a \big( -h^2 \Delta_{\Sigma} - 2h \nabla \rho - h \Delta \rho \big)  Tu_h, Tu_h \big\rangle_{\Sigma}    - h  \big\langle (\partial_{\beta}a ) \,  h\partial_{\beta} Tu_h, Tu_h \big\rangle_{\Sigma} \nonumber \\
 & =  \big\langle a \big( h^2 \Delta_{\Sigma} + 2h \nabla \rho + h \Delta \rho \big)  Tu_h, Tu_h \big\rangle_{\Sigma}.   \end{align}

 In the last line of (\ref{1termRHS}) we have used that $R(0,\beta',hD_{\beta'}) = -h^2 \Delta_{\Sigma}$ where the latter is the induced $h$-Laplacian along $\Sigma$ and $\partial_{\beta} a(\beta') = 0.$ We have also used that $\chi(0) = 1$ and $\chi'(0) = 0.$ 

 As  for the second boundary term, since $\chi(0) = 1,$
 
 \begin{eqnarray} \label{2termRHS}
i \big\langle A_{\epsilon}(h) Tu_h, h \partial_{\beta} Tu_h \big\rangle_{\Sigma} 
&=& \big\langle \chi(\beta/\epsilon) a  h \partial_{\beta}) Tu_h,  h \partial_{\beta} Tu_h \big\rangle_{\Sigma} \nonumber \\
 &=& \big\langle a  \,  h \partial_{\beta} Tu_h, \, h \partial_{\beta} Tu_h \big\rangle_{\Sigma}. 
 \end{eqnarray}\
 
\begin{remark} We note here that  Proposition \ref{2microlocal} follows from (\ref{rel3.5}) since in view of (\ref{1termRHS}) and (\ref{2termRHS}), the boundary terms appearing on the RHS are independent of $\epsilon$, so one can take $\lim_{\epsilon \to 0}.$
\end{remark}

From Lemma \ref{sub}, (\ref{LHSupshot1}), (\ref{1termRHS}) and (\ref{2termRHS}), it follows that

\begin{eqnarray} \label{UPSHOT}
\int_{S^*X}  \epsilon^{-1} \chi'(\beta/\epsilon) \, a\, ( q_1 + q_2) \, d\sigma + O(\epsilon) + o_{\epsilon}(1) \hspace{1in}\nonumber \\ \nonumber \\
= \big\langle  \big( h^2 \Delta_{\Sigma} + 2h \nabla \rho + h \Delta \rho \big)  Tu_h, Tu_h \big\rangle_{\Sigma}    
+ \big\langle a  \,  h \partial_{\beta} Tu_h, \, h \partial_{\beta} Tu_h \big\rangle_{\Sigma}. 
\end{eqnarray}\\

 
 To complete the proof of  take $h \to 0^+$ in (\ref{UPSHOT}) and then  $\epsilon \to 0^+.$ We note that the RHS in (\ref{UPSHOT}) which is the sum of the 2-microlocal QF terms on the LHS of Proposition \ref{2microlocal} is actually independent of $\epsilon>0.$

 Finally, writing $e^{-1/2h} e^{-\rho/h} u_{h}^{\C} = T_{hol}u_h$ in (\ref{UPSHOT}) completes the proof of Theorem 3 in the case of general separating hypersurfaces $\Sigma \subset X^{\mathbb C}.$

\end{proof}

\section{Appendix: Proof of Lemma \ref{sub}}

\begin{proof} Since the expression in Lemma \ref{sub} is not a commutator, we must compute the leading order terms in each term separately. 
First, we compute the asymptotics of $ I_1(h):= \frac{2i}{h} \langle h \nabla \rho A_\epsilon Tu_h, Tu_h \rangle_{L^2}.$
By direct computation, using  (see (\ref{grad})) $\nabla \rho = \nabla_{\tilde{g}}\rho = \xi \cdot \partial_{\xi},$
$$ h\nabla \rho A_\epsilon Tu(x,\xi) = (2\pi h)^{-3n/4} \int_X ( \xi \cdot h \partial_{\xi} ) ( a \chi(\beta/\epsilon) hD_{\beta}) ( e^{i\phi/h} b ) u(y) dy$$
$$ = (2\pi h)^{-3n/4} \int_X ( \xi \cdot h \partial_{\xi} )  \Big(  e^{i\phi/h}  [ a \chi(\beta/\epsilon) ( \partial_{\beta} \phi)  b  +  a \chi(\beta/\epsilon) h D_{\beta} b ] \Big) u(y) dy$$
$$= (2\pi h)^{-3n/4}\int_X e^{i\phi/h} \big(  i \xi \cdot (\partial_{\xi} \phi) c   + h \xi \cdot \partial_{\xi} c  \big) u(y) dy$$
where $c = a \chi(\beta/\epsilon) \Big( ( \partial_{\beta} \phi)  b  + h D_{\beta} b \Big).$
It follows that $I_1(h)$ equals
\begin{align} \label{I1}
  (2 \pi h)^{-3n/2} (2 i h^{-1}) \int e^{i \phi(x,\xi,y) - i \overline{\phi(x,\xi,z)}/h} A_{cb}(x,y,\xi,z) u(y) u(z) dy dz d\xi dx.
\end{align}
where
\[
A_{cb} = \big( i \xi \cdot (\partial_{\xi} \phi(x,y,\xi)) c(x,y,\xi)   + h \xi \cdot \partial_{\xi} c(x,y,\xi)  \big)     \overline{ b(x,\xi,z)}.
\]
One applies stationary phase in $x$ in (\ref{I1}) with critical point $x_c = \frac{y+z}{2} (   1 + O(y-z) ).$ Since $\Re \phi(x_c,\xi, y) = \frac{1}{2} \langle z-y,\xi \rangle$ and 
$\Im [ \phi(x_c, \xi,y) - \overline{\phi(x_c,\xi,z)} ] = |y-z|^2 + O(|y-z|^3),$ it follows that (\ref{I1}) equals

\begin{align} \label{I1.1}
& (2\pi h)^{-n}  (2ih^{-1}) \int e^{i\langle z-y, \xi \rangle/2h}    \big( i \xi \cdot (\partial_{\xi} \phi(x_c,y,\xi)) c(x_c,y,\xi)   + h \xi \cdot \partial_{\xi} c(x_c,y,\xi)  \big) \nonumber \\ 
 & \quad  \quad \times   \overline{ b_0(x_c,\xi,z)} e^{[ -|y-z|^2  + O(y-z)^3  ]/h}   u(y) u(z) dy dz d\xi  \nonumber \\ 
  &  \quad + (2\pi h)^{-n} (2ih^{-1}) \int e^{i\langle z-y, \xi \rangle/2h} \nonumber \\
  & \quad \quad \times  h \Delta_x \Big(  [  i \xi \cdot (\partial_{\xi} \phi(x,y,\xi)) \, c(x,y,\xi)   + h \xi \cdot \partial_{\xi} c(x,y,\xi)  ]  \nonumber \\
 &  \quad \quad \times   \overline{ b_0(x,\xi,z)}  \Big) |_{x = x_c} \, e^{[ -|y-z|^2  + O(y-z)^3  ]/h}  u(y) u(z) dy dz d\xi  + O_{L^2 \to L^2}(h). 
\end{align}

Since  $\partial_{\xi} \phi(x_c,y,\xi)) = \frac{ (z-y)}{2} + O(|y-z|^2)$ and by Taylor expansion, $$e^{[ -|y-z|^2  + O(y-z)^3  ]/h}  = 1 + O( [|y-z|^2  + O(y-z)^3  ]/h ),$$ it follows by integration by parts in (\ref{I1.1}) with respect to $h D_{\xi}$ that the first integral in (\ref{I1.1}) can be written in the form

\begin{align} \label{I1.2}
 (&2\pi h)^{-n} \int e^{i\langle z-y, \xi \rangle/2h}  ( -2 i \partial_{\xi} ) \cdot  \Big( \xi \,  c(x_c,y,\xi)   \, \overline{b_0(x_c,\xi,z)}  \Big) \, u(y) u(z) dy dz d\xi   \hspace{2in} \nonumber \\
  &+  (2\pi h)^{-n} \int e^{i\langle z-y, \xi \rangle/2h} \, \Big(  2 i \xi \cdot \partial_{\xi} c(x_c,y,\xi)  
 \times \overline{b_0(x_x,\xi,z)} \Big)    u(y) u(z) dy dz d\xi  + O(h). \hspace{2in}\nonumber \\
  \end{align}
  It follows by application of QE that the leading terms in (\ref{I1.2}) arise when $\partial_\xi$ hits the cutoff $\chi(\beta/\epsilon)$ in $c(x_c,\xi,y)$; all others terms are $O(\epsilon) + o_{\epsilon}(1).$ These terms appear with opposite signs in the two summands  and so, the contribution from (\ref{I1.2}) is $O(\epsilon) + o_{\epsilon}(1).$ \

  
  As for the second integral  in (\ref{I1.1}), since $\partial_{\xi} \phi(x-y,\xi) = O(x-y),$ again by integration by parts with respect to $hD_{\xi},$ it equals
  \begin{eqnarray} \label{I1.3}
  (2\pi h)^{-n} \int e^{i\langle z-y, \xi \rangle/2h}   
   \Big(  [  -4 \xi \cdot ( \partial_x \partial_{\xi}  \phi(x,y,\xi)) \, \partial_x c(x,y,\xi)   ]  \,\,
    \overline{ b_0(x,\xi,z)}  \Big) |_{x = x_c} \, \hspace{1in}\nonumber \\
    \times u(y) u(z) dy dz d\xi  + O_{L^2 \to L^2}(h) \hspace{1in} \nonumber \\
  = (2\pi h)^{-n} \int e^{i\langle z-y, \xi \rangle/2h}  [ - 4 a (\partial_{\beta} \phi) |b_0|^2 \xi \cdot   \partial_x  ( \chi(\beta/\epsilon) ) |_{x = x_c}  u(y) u(z) dy dz d\xi \hspace{1in} \nonumber \\
    + O(\epsilon) + o_{\epsilon}(1) \hspace{3.5in} \nonumber \\
    = (2\pi h)^{-n} \int e^{i\langle z-y, \xi \rangle/2h}   \, \Big(  - 4 a (\partial_{\beta} \phi) |b_0|^2  \, \big( \xi \cdot  \partial_{x} \beta \big) \,  \epsilon^{-1}  \chi'(\beta/\epsilon)     \Big) |_{x = x_c}  u(y) u(z) dy dz d\xi \hspace{1in} \nonumber \\
    + O(\epsilon) + o_{\epsilon}(1) \hspace{3.5in}
\end{eqnarray}
  
  In (\ref{I1.3}) we have used that in the Leibniz expansion for the Laplacian $\Delta_x$, to get the leading contribution, one derivative $\partial_{x_j}$ must hit $\partial_{\xi} \phi$ and another $\partial_{x_j}$ must hit the $\chi(\beta/\epsilon)$-term  in $c$; all other terms are $O(\epsilon) + o_{\epsilon}(1)$ by QE.\\

  

For the second term $I_2(h):=  -2i h^{-1} \big\langle A_{\epsilon}(h) \,  Tu_h, \,  h \nabla \rho \, T u_h \big\rangle_{L^2}$ in Lemma \ref{2microlocal}, a similar argument to the one above for $I_1(h)$ gives

\begin{eqnarray}\label{I2.1}
A_{\epsilon} Tu(x,\xi) = (2\pi h)^{-3n/4} \int a \chi (\beta/\epsilon) hD_{\beta} (e^{i\phi/h} b ) u(y) dy \nonumber \\
= \int e^{i \phi(x,\xi,y)/h} a(x,\xi) \chi(\beta/\epsilon) ( \partial_\beta \phi) b_0 u(y) dy + O_{L^2}(h).
\end{eqnarray}

Similarily, using that $\nabla \rho = \xi \cdot \partial_{\xi},$
\begin{eqnarray} \label{I2.2}
 h \nabla \rho Tu_h(x,\xi) = (2\pi h)^{-3n/4} \int  e^{i \phi/h}  \Big(  i \xi \cdot  (\partial_{\xi} \phi(x,\xi,y) )  b  \nonumber \\
 + h \xi \cdot (\partial_{\xi} b) \Big) u(y) dy 
\end{eqnarray}

So, from (\ref{I2.1}) and (\ref{I2.2}), $I_2(h)$ equals
\begin{eqnarray} \label{I2.3}
-(2ih^{-1}) (2\pi h)^{-3n/2}  \int e^{i \phi(x,\xi,y) - i \overline{\phi(x,\xi,z)}/h} a(x,\xi) \chi(\beta/\epsilon) ( \partial_\beta \phi) b_0(x,\xi)    \nonumber \\
  \times   \overline{ \big(  i \xi \cdot  (\partial_{\xi} \phi )  b_0  + h \xi \cdot (\partial_{\xi} b_0)\big) } u(y) u(z) dy dz d\xi dx + O(h). 
\end{eqnarray}

Again, one applies stationary phase in $x$ in (\ref{I2.3}). Since now the first term does not involve any differentiation of $\chi(\beta/\epsilon)$, after applying QE, that term is $O(\epsilon) + o_{\epsilon}(1).$ The next term in the stationary phase expansion involves application of $h \Delta_x$ to the amplitude in (\ref{I2.3}). Here, to isolate the leading order term,  by Liebniz rule one $x$-derivative  hits $\chi(\beta/\epsilon)$ and another one hits $\partial_{\xi} \phi(x,\xi,y)$ with $\partial_x \partial_{\xi} \phi = 1.$ Here, as above, we note that $\partial_{\xi} \phi(x_c,\xi,y) = \frac{y-z}{2}$ and so, if an  $x$-derivative does not hit this term, by integration by parts with respect to $hD_{\xi}$ in (\ref{I2.3}) one gets an $O(h)$ contribution. The end result is that

\begin{eqnarray} \label{I2.4}
I_2(h) =  I_1(h) + O(\epsilon) + o_{\epsilon}(1),
\end{eqnarray}
where $I_1(h)$ is given (\ref{I1.3}).

Finally, we note that to change (\ref{I1.3}) into an $h$-psdo, we rescale the frequency variables $\xi \mapsto -2 \xi.$ The result is that
\begin{eqnarray} \label{I2.5}
I_1(h) + I_2(h) = \langle Op_h(q_1) u, u \rangle + O(\epsilon) + o_{\epsilon}(1),
\end{eqnarray} 
where setting $\eta = -\xi /2,$

\begin{eqnarray} \label{q1term}
q_1 (x,\eta) =   8 a(x,\xi)  |b_0(x,\xi,x)|^2 (\partial_{\beta} \phi)(x,\xi)  \nonumber \\
\times  \big( \eta \cdot \partial_{x} \beta(x,\xi) \big) \frac{1}{\epsilon} \chi'(\beta/\epsilon) |_{\xi = -2\eta}.
\end{eqnarray}




\end{proof}

 


\bibliographystyle{alpha}
\bibliography{HC-bib}

\end{document}